\documentclass[12pt, a4paper]{amsart}
\usepackage{amsmath,amssymb}

\setlength{\evensidemargin}{0pt}
\setlength{\oddsidemargin}{0pt}
\setlength{\marginparwidth}{36pt}
\setlength{\textwidth}{400pt}
\setlength{\textheight}{640pt}

\title{A new construction of the real numbers by
alternating series}

\author{Soichi Ikeda}

\address{Graduate School of Mathematics, Nagoya University,
Furocho, Chikusaku, Nagoya 464-8602, Japan}

\email{m10004u@math.nagoya-u.ac.jp}

\keywords{alternating series, real number, Sylvester series,
irrationality}

\subjclass[2010]{11U99, 11J72}

\theoremstyle{plain}
\newtheorem{theorem}{Theorem}
\newtheorem{lemma}{Lemma}

\newtheorem{definition}{Definition}
\newtheorem{proposition}{Proposition}

\theoremstyle{remark}
\newtheorem{remark}{Remark}

\numberwithin{theorem}{section}
\numberwithin{lemma}{section}
\numberwithin{corollary}{section}
\numberwithin{definition}{section}
\numberwithin{proposition}{section}
\numberwithin{remark}{section}
\numberwithin{equation}{section}

\begin{document}
\begin{abstract}
We put forward a new method of constructing the complete ordered
field of real numbers from the ordered field of
rational numbers. Our method is a generalization of that of
A. Knopfmacher and J. Knopfmacher. Our result implies that there
exist infinitely many ways of constructing the complete ordered
field of real numbers. As an application of our results, we
prove the irrationality of certain numbers.
\end{abstract}

\maketitle

\section{Introduction}
The purpose of this paper is to put forward a new method of
constructing the complete ordered field of real numbers
from the ordered field of rational numbers.
Our method is similar to the method which was put forward
by A. Knopfmacher and J. Knopfmacher in \cite{knopf_alt},
but our method is more general. Moreover our result
gives infinitely many ways of constructing the complete
ordered field of real numbers. As an application of our results,
we prove the irrationality of certain series.

A. Knopfmacher and J. Knopfmacher constructed the complete ordered
field of real numbers by the Sylvester expansion and
the Engel expansion in \cite{knopf_pos} and by the
alternating-Sylvester expansion
and the alternating-Engel expansion in \cite{knopf_alt}.
The advantages of these constructions are the fact that
those are concrete and do not depend on the notion of equivalence classes.
The alternating-Sylvester expansion and the alternating-Engel
expansion are generalizations of Oppenheim's expansion
(see \cite{oppenheim}) and special cases of the alternating
Balkema-Oppenheim's expansion (see \cite{knopf_opp}),
which were introduced by
A. Knopfmacher and J. Knopfmacher in \cite{knopf_alt}.
The definition of the alternating-Sylvester expansion and
the alternating-Engel expansion are the following.

\paragraph{(i) \textbf{Alternating-Sylvester expansion}}
Let $\alpha \in \mathbb{R}$, $a_0 = [\alpha]$ and
$A_1 = \alpha - a_0$. We define, for $n \in \mathbb{N}$ and
$A_n > 0$,
\[ a_n = \Bigl[ \frac{1}{A_n} \Bigr] \]
and
\[ A_{n+1} = \frac{1}{a_n} - A_n. \]
Then
\begin{equation}  \label{def_alt_syl}
  \alpha = a_0 + \frac{1}{a_1} - \frac{1}{a_2} +
    \frac{1}{a_3} - \dots,
\end{equation}
where $a_1 \ge 1$ and $a_{n+1} \ge a_n (a_n +1)$
for $n \in \mathbb{N}$.

\vspace{2ex}

\paragraph{(ii) \textbf{Alternating-Engel expansion}}
Let $\alpha \in \mathbb{R}$, $a_0 = [\alpha]$ and
$A_1 = \alpha - a_0$. We define, for $n \in \mathbb{N}$ and
$A_n > 0$,
\[ a_n = \Bigl[ \frac{1}{A_n} \Bigr] \]
and
\[ A_{n+1} = 1 - a_n A_n. \]
Then
\begin{equation} \label{def_alt_eng}
  \alpha = a_0 + \frac{1}{a_1} - \frac{1}{a_1 a_2} +
     \frac{1}{a_1 a_2 a_3} - \dots,
\end{equation}
where $a_1 \ge 1$ and $a_{n+1} \ge a_i + 1$ for $n \in \mathbb{N}$.

\vspace{2ex}

The relation
\begin{equation} \label{ineq_frac1}
  \frac{1}{d+1} < \alpha \le \frac{1}{d} \qquad
    (\alpha \in (0,1], \, d = [\alpha^{-1}])
\end{equation}
is used in these expansions. We introduce a new series expansion for
every real numbers by using a more general relation
\[ \frac{c}{d+1} < \alpha \le \frac{c}{d}
     \qquad (\alpha \in (0,1], \, c \in \mathbb{N}, \,
     d = [c \alpha^{-1}]). \]

\vspace{2ex}

\begin{definition}[Generalized alternating-Sylvester expansion]
Let $\alpha \in \mathbb{R}$, $q_0 = [\alpha]$ and
$A_1 = \alpha - q_0$.
Let $\{c_n\}_{n=1}^{\infty}$
be a sequence of positive integers.
We define, for $n \in \mathbb{N}$,
\begin{gather}
  a_n = \Bigl[ \frac{c_n}{A_n} \Bigr] \qquad
    (\text{for $A_n \neq 0$}),  \notag  \\
  q_n = \begin{cases}
          \frac{c_n}{a_n} & (A_n \neq 0) \\
          0 & (A_n = 0)
        \end{cases}  \notag
\end{gather}
and
\[ A_{n+1} = q_n - A_n. \]
Then
\begin{equation}  \label{gene_alt_syl}
  \alpha = q_0 + \sum_{n=1}^{\infty} (-1)^{n-1} q_n.
\end{equation}
\end{definition}

\vspace{2ex}

If we regard the alternating-Sylvester series (\ref{def_alt_syl})
as an analogue of the simple continued fraction
\[ a_0 + \cfrac{1}{a_1 +
         \cfrac{1}{a_2 +
         \cfrac{1}{a_3 + \cdots}}}, \]
the generalized alternating-Sylvester series (\ref{gene_alt_syl})
is an analogue of the continued fraction
\[ a_0 + \cfrac{c_1}{a_1 +
         \cfrac{c_2}{a_2 +
         \cfrac{c_3}{a_3 + \cdots}}}. \]
Therefore we can expect that if we take some appropriate $\{c_n\}$,
then we can get a simple series representation for some real
numbers.

The outline of this paper is the following. In Section 2 we
study some fundamental properties of the generalized
alternating-Sylvester series. In Section 3 we take an
arbitrary sequence of positive integer $\{c_n\}_{n=1}^{\infty}$
such that $c_n \mid c_{n+1}$ for all $n \in \mathbb{N}$, and
we prove that the set
\begin{equation}  \label{def_scn}
  S(\{c_n\}) =
  \{ \{q_n\}_{n=0}^{\infty} \mid \text{$\{q_n\}$
  appears in (\ref{gene_alt_syl})} \}
\end{equation}
can be identified with the complete ordered field of real numbers
$\mathbb{R}$ by
introducing the relation $<$ and the operator $+$ and $\cdot$.
In other words we prove that $S(\{c_n\})$ becomes an ordered field
which is isomorphic to $\mathbb{R}$. Since there exist infinitely
many $\{c_n\}$ such that $c_n \mid c_{n+1}$, this implies that
there exist infinitely many ways of constructing the complete
ordered field of real numbers.
Our construction is similar
to that in \cite{knopf_alt}. Therefore our construction is also
concrete and does not use the notion of equivalence classes.
When we prove that
$S(\{c_n\})$ becomes an ordered field, we use a general lemma
(see Lemma \ref{lem_main}). It seems that this lemma can be
used in \cite{knopf_prod}, \cite{knopf_pos} and \cite{knopf_alt}.
In section 4, we prove the irrationality of certain
series by Proposition \ref{prop_ratio} and Proposition
\ref{prop_tcn}.

\begin{remark}
It seems that we can define generalized alternating-Engel
series as follows :

Let $\alpha \in \mathbb{R}$, $A_1 = \alpha - a_0$ with
$0 < A_1 \le 1$, $a_0 \in \mathbb{Z}$. Let $\{c_n\}$ be
a sequence of positive integers. We define,
for $n \in \mathbb{N}$ and $A_n \neq 0$,
\[ a_n = \Bigl[ \frac{c_n}{A_n} \Bigr] \]
and
\[ A_{n+1} = c_n - a_n A_n. \]
Then
\[ \alpha = a_0 + \frac{c_1}{a_1} - \frac{c_2}{a_1 a_2} +
   \frac{c_3}{a_1 a_2 a_3} - \dots. \]

\vspace{2ex}

However, $a_{n+1} \ge a_n$ does not hold in this series.
For example, if we set $A_1 = \alpha = 5/7$, $c_1 = 2$ and
$c_2 = 1$, then $a_1 = 2$, $A_2 = 4/7$ and $a_2 = 1$.
This is a trouble. In order to simplify the argument we
do not argue on this series.
\end{remark}

\section{Fundamental properties of the generalized alternating
Sylvester series}
In this section, we take an arbitrary sequence of positive integers
$\{c_n\}_{n=1}^{\infty}$ and fix it.

\begin{proposition}  \label{prop_fund_propt}
The generalized alternating-Sylvester series has the following
properties for $n \in \mathbb{N}$.

\vspace{2ex}

\begin{enumerate}
\item If $A_n \neq 0$, then we have
\[ \frac{c_n}{a_n + 1} < A_n \le \frac{c_n}{a_n}. \]

\item If $A_{n+1} \neq 0$, then we have
\[ a_{n+1} + 1 > \frac{c_{n+1}}{c_n} a_n (a_n + 1). \]

\item The evaluation $A_n \ge A_{n+1}$ holds.
If $A_n \neq 0$, then we have $A_n > A_{n+1}$.

\item The evaluation $q_n \le 1$ holds.

\item If $A_{n+1} \neq 0$, then we have $a_{n+1} > a_n$.

\item If $A_n \neq 0$, then we have $A_{n+1} < \frac{1}{a_n + 1}$.

\item The evaluation $q_n \ge q_{n+1}$ holds.
If $q_{n+1} \neq 0$, then we have $q_n > q_{n+1}$.
\end{enumerate}
\end{proposition}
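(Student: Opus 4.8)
The plan is to establish the seven items in the stated order, the essential point being to carry the bound $0\le A_n<1$ along as an induction hypothesis; once it is in hand, each item reduces to a short computation and the apparent circularity (items (1)--(6) each mention a neighbouring index) dissolves. The base case is immediate, $A_1=\alpha-q_0\in[0,1)$ since $q_0=[\alpha]$. For the inductive step assume $0\le A_n<1$. If $A_n=0$ then $q_n=0$ and $A_{n+1}=q_n-A_n=0$, so the bound persists and every assertion at index $n$ is trivial; so assume $0<A_n<1$. Then $c_n/A_n>c_n\ge1$, hence $a_n=[c_n/A_n]\ge c_n\ge1$, and this lower bound on $a_n$ is what gets used over and over. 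Rearranging the defining inequalities $a_n\le c_n/A_n<a_n+1$ (multiply by $A_n>0$, divide by $a_n$ and by $a_n+1$) gives $c_n/(a_n+1)<A_n\le c_n/a_n$, which is item (1).

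Next, still inside the inductive step, I would substitute (1) into $A_{n+1}=q_n-A_n=c_n/a_n-A_n$: the bound $A_n\le c_n/a_n$ gives $A_{n+1}\ge0$, and $A_n>c_n/(a_n+1)$ gives $A_{n+1}<c_n/a_n-c_n/(a_n+1)=c_n/(a_n(a_n+1))$. Since $a_n\ge c_n$, the last quantity is $\le 1/(a_n+1)\le 1/2$, which both closes the induction ($0\le A_{n+1}<1$) and is item (6). Item (3) comes along: $A_{n+1}\ge0$ is done, and for the strict part, $A_n>c_n/(a_n+1)\ge c_n/(2a_n)$ (because $a_n\ge1$) forces $A_{n+1}=c_n/a_n-A_n<c_n/(2a_n)<A_n$, the case $A_n=0$ being trivial. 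Item (4) is then immediate: $q_n=c_n/a_n\le c_n/c_n=1$ when $A_n\ne0$, and $q_n=0$ otherwise.

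With the bound and item (1) now available at every index, the last three items are formal. For item (2): if $A_{n+1}\ne0$ then $A_n\ne0$ as well (else $A_{n+1}=q_n-A_n=0$), so the lower half of (1) at index $n+1$ reads $a_{n+1}+1>c_{n+1}/A_{n+1}$, and combining this with $A_{n+1}<c_n/(a_n(a_n+1))$ from the previous paragraph gives $a_{n+1}+1>(c_{n+1}/c_n)\,a_n(a_n+1)$. Item (5) falls out of this: as $a_n\ge c_n$ and $c_{n+1}\ge1$, we get $(c_{n+1}/c_n)\,a_n(a_n+1)\ge c_{n+1}(a_n+1)\ge a_n+1$, hence $a_{n+1}+1>a_n+1$. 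For item (7) the slick route is the identity $q_n=A_n+A_{n+1}$, which is just $A_{n+1}=q_n-A_n$ rearranged; it yields $q_n-q_{n+1}=A_n-A_{n+2}$, so item (3) applied twice gives $A_n\ge A_{n+1}\ge A_{n+2}$ and hence $q_n\ge q_{n+1}$, while if $q_{n+1}\ne0$ then $A_{n+1}\ne0$, which forces $A_n\ne0$ and thus the strict part of (3), giving $q_n>q_{n+1}$.

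No step should be genuinely hard; the care needed is purely bookkeeping --- tracking which side of the inequality in (1) is invoked, keeping $a_n\ge c_n$ (not just $a_n\ge1$) available for items (4), (5) and (6), and noting that the degenerate case $A_n=0$ trivialises every clause. The one mildly clever choice is to bundle the boundedness $0\le A_n<1$ together with items (1), (3), (4) and (6) into a single induction on $n$, deriving items (2), (5) and (7) as consequences afterwards.
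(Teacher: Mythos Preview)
Your argument is correct and follows essentially the same computations as the paper for items (1)--(6); the paper simply leaves implicit the induction on $0\le A_n<1$ that you spell out (its proof of (3) tacitly uses $a_n\ge1$, which needs $A_n\le c_n$, which is exactly your inductive bound, and its proof of (4) then quotes (3) to get $A_n<1$). The one genuine difference is item (7): the paper derives it by a direct estimate from (2) and (4), bounding $q_{n+1}<c_{n+1}/(c_{n+1}q_n^{-1}(a_n+1)-1)\le 1/a_n\le q_n$, whereas you use the identity $q_n-q_{n+1}=A_n-A_{n+2}$ and reduce to (3). Your route is shorter and conceptually cleaner; the paper's has the minor advantage of giving a sharper explicit upper bound on $q_{n+1}$ in terms of $a_n$.
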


\begin{proof}
(1) This trivially follows from the definition of the generalized
alternating-Sylvester espansion.

(2) From (1) and the definition, we have
\begin{equation*}
  \begin{split}
    a_{n+1} + 1 &> \frac{c_{n+1}}{A_{n+1}} \\
                &= \frac{c_{n+1}}{\frac{c_n}{a_n} - A_n} \\
                &> \frac{c_{n+1}}{\frac{c_n}{a_n} -
                  \frac{c_n}{a_n + 1}}  \\
                &= \frac{c_{n+1}}{c_n} a_n (a_n +1).
  \end{split}
\end{equation*}

(3) In the case $A_n = 0$, we have $A_n \ge A_{n+1}$.
For $A_n \neq 0$, we have
\[ A_{n+1} < \frac{c_n}{a_n} - \frac{c_n}{a_n +1}
   \le \frac{c_n}{a_n +1} < A_n. \]

(4) By (3), we have $A_n < 1$ for all $n$. Hence
\[ a_n = \Bigl[ \frac{c_n}{A_n} \Bigr] \ge c_n \]
holds. This implies (4).

(5) From (2), we have (5) by using (4).

(6) By (4), we have
\[ A_{n+1} < \frac{c_n}{a_n} - \frac{c_n}{a_n +1} =
   \frac{c_n}{a_n (a_n + 1)} \le \frac{1}{a_n +1}. \]

(7) In the case $q_{n+1} = 0$, we have $q_n \ge q_{n+1}$.
For $q_{n+1} \neq 0$, we have
\[ q_{n+1} < \frac{c_{n+1}}{c_{n+1} {q_n}^{-1} (a_n +1) -1}
   \le \frac{c_{n+1}}{c_{n+1} a_n + c_{n+1} -1}
   \le \frac{1}{a_n} \le q_n \]
by (2) and (4).
\end{proof}

\begin{remark}  \label{rem_An}
Since we have
\[ \sum_{k=1}^n (-1)^{k-1} q_k =
     A_1 + (-1)^{n-1} A_{n+1}
       \qquad (\text{for all $n \in \mathbb{N}$}), \]
the series in (\ref{gene_alt_syl}) converges by
Proposition \ref{prop_fund_propt}. Hence
\begin{equation}  \label{eq_An}
  (-1)^{n-1} \sum_{k=n}^{\infty} (-1)^{k-1} q_k =
  (-1)^{n-1} \sum_{k=n}^{\infty} (-1)^{k-1} (A_{k+1} + A_k) = A_n
\end{equation}
holds for all $n \in \mathbb{N}$.
\end{remark}

In order to prove Proposition \ref{prop_ord} we require
some lemmas.

We can easily see that the following lemma holds.

\begin{lemma}  \label{lem_int_frac}
Let $c,d \in \mathbb{N}$ and $\alpha \in (0,1]$. Then

\begin{enumerate}
\item there does not exist $d^{\prime} \in \mathbb{Z}$ such
that
\[ \frac{c}{d+1} < \frac{c}{d^{\prime}} < \frac{c}{d}, \]

\item $d = [c \alpha^{-1}]$ is equivalent to
\[ \frac{c}{d+1} < \alpha \le \frac{c}{d}. \]
\end{enumerate}
\end{lemma}

\vspace{1ex}

\begin{lemma}  \label{lem_appro_frac}
Let $\alpha, \alpha^{\prime} \in (0,1]$, $c \in \mathbb{N}$,
$d = [c/ \alpha]$ and $d^{\prime} = [c/ \alpha^{\prime}]$.
If $c/d \neq c/d^{\prime}$ then $\alpha < \alpha^{\prime}$
is equivalent to $c/d < c/d^{\prime}$.
\end{lemma}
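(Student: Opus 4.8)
The plan is to prove Lemma~\ref{lem_appro_frac} by contrapositive together with the trichotomy of the order on $\mathbb{Q}$, reducing everything to the characterization of $d=[c/\alpha]$ supplied by Lemma~\ref{lem_int_frac}(2). First I would record that, since $\alpha,\alpha'\in(0,1]$ and $c\in\mathbb{N}$, both $d$ and $d'$ are well-defined positive integers, and Lemma~\ref{lem_int_frac}(2) gives
\[
\frac{c}{d+1}<\alpha\le\frac{c}{d},\qquad \frac{c}{d'+1}<\alpha'\le\frac{c}{d'}.
\]
The claim is an ``if and only if'' under the standing hypothesis $c/d\neq c/d'$, i.e. $d\neq d'$, so it suffices to prove one implication in each direction; by symmetry in $\alpha\leftrightarrow\alpha'$ it is in fact enough to prove one direction.

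The key step is the implication $c/d<c/d'\implies\alpha<\alpha'$. Assuming $c/d<c/d'$ (equivalently $d>d'$), I would suppose for contradiction that $\alpha\ge\alpha'$. Combining with the displayed inequalities yields
\[
\frac{c}{d+1}<\alpha'\le\alpha\le\frac{c}{d},
\]
so $c/(d+1)<\alpha'\le c/d$. But also $c/(d'+1)<\alpha'\le c/d'$, and from $d>d'$ we have $d\ge d'+1$, hence $c/d\le c/(d'+1)<\alpha'$, contradicting $\alpha'\le c/d$. (Equivalently: $\alpha'$ would lie strictly between $c/(d+1)$ and $c/d$ while also being $\le c/d'$ with $d'\le d-1$, which is impossible.) This gives $\alpha<\alpha'$. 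For the converse $\alpha<\alpha'\implies c/d<c/d'$, I would argue by trichotomy: if $c/d=c/d'$ this is excluded by hypothesis, and if $c/d>c/d'$ then the direction just proved (applied with the roles of the two pairs swapped) forces $\alpha>\alpha'$, contradiction; hence $c/d<c/d'$.

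There is no serious obstacle here — the statement is essentially a bookkeeping exercise built on Lemma~\ref{lem_int_frac}. The one point requiring a little care is making sure the strict versus non-strict inequalities line up correctly: $\alpha\le c/d$ is non-strict while $c/(d+1)<\alpha$ is strict, and it is precisely the strictness of $c/(d+1)<\alpha'$ that clashes with $\alpha'\le c/d\le c/(d'+1)$ when $d\ge d'+1$, so the contradiction genuinely uses $d\neq d'$. I would also note explicitly at the outset that $c/d\neq c/d'$ is equivalent to $d\neq d'$ (both positive integers, $c$ fixed), since the hypothesis and conclusion of the lemma are both phrased in terms of the fractions rather than the denominators.
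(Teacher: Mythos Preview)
Your proof is correct, and the route differs from the paper's in organization. The paper first proves the forward implication $\alpha<\alpha'\Rightarrow c/d<c/d'$ by an explicit three-case split on the relative positions of $\alpha,\alpha'$ with respect to $c/d$ and $c/(d'+1)$, appealing to both parts of Lemma~\ref{lem_int_frac}, and then proves the backward implication directly via the chain $\alpha\le c/d\le c/(d'+1)<\alpha'$. You instead establish only the backward implication (framed as a contradiction, but resting on the same key inequality $c/d\le c/(d'+1)$ from $d\ge d'+1$) and recover the forward implication by symmetry and trichotomy. Your approach is more economical and sidesteps the case analysis entirely; the paper's approach has the mild advantage of exhibiting directly which configuration must occur when $\alpha<\alpha'$. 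One small cosmetic point: in your contradiction step the inequality $c/(d+1)<\alpha'$ is unnecessary (and does not follow from $\alpha'\le\alpha$ alone, though it does hold since $d+1>d'+1$); the contradiction is already secured by $\alpha'\le\alpha\le c/d$ together with $c/d\le c/(d'+1)<\alpha'$.
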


\begin{proof}
First, we assume $\alpha < \alpha^{\prime}$. Since
$c/(d+1) < \alpha \le c/d$ and
$c/(d^{\prime} + 1) < \alpha^{\prime} \le c/d^{\prime}$ hold
by Lemma \ref{lem_int_frac} (2), it is sufficient that
we consider the following cases.

\begin{enumerate}
\item $\alpha < \alpha^{\prime} \le c/d$.
\item $c/(d^{\prime} + 1) < \alpha \le c/d < \alpha^{\prime}$.
\item $\alpha \le c/(d^{\prime} + 1) < \alpha^{\prime}$.
\end{enumerate}

If (1) holds, then we have
\[ \frac{c}{d+1} < \alpha < \alpha^{\prime} \le \frac{c}{d}. \]
This implies that $c/d = c/d^{\prime}$ by
Lemma \ref{lem_int_frac} (2), which is impossible.

If (2) holds, then we have
\[ \frac{c}{d^{\prime} + 1} < \frac{c}{d} < \alpha^{\prime}
   \le \frac{c}{d^{\prime}}, \]
which is impossible by Lemma \ref{lem_int_frac} (1).

If (3) holds, then we have
\[ \alpha \le \frac{c}{d} \le \frac{c}{d^{\prime} + 1}
   < \alpha^{\prime} < \frac{c}{d^{\prime}} \]
by Lemma \ref{lem_int_frac} (1).

Next, we assume $c/d < c/d^{\prime}$. Since
$c/(d^{\prime} + 1) < c/d$ is impossible by Lemma
\ref{lem_int_frac} (1), we have
\[ \alpha \le \frac{c}{d} \le \frac{c}{d^{\prime} + 1}
   < \alpha^{\prime}. \]
\end{proof}

\begin{proposition}  \label{prop_ord}
Let $\alpha, \alpha^{\prime} \in \mathbb{R}$ with
$\alpha \neq \alpha^{\prime}$. We define ${a_n}^{\prime}$,
${A_n}^{\prime}$ and $q_n^{\prime}$ as $a_n$, $A_n$ and $q_n$
which appear in the generalized alternating Sylvester expansion of
$\alpha^{\prime}$, respectively. Let
\[ i = \min \{ j \in \mathbb{N} \cup \{0\} \mid q_j \neq q_j^{\prime} \}. \]
Then $\alpha < \alpha^{\prime}$ is equivalent to
\[ \begin{cases}
       q_0 < q_0^{\prime} & (i = 0),  \\
       q_i < q_i^{\prime} & (2 \nmid i),  \\
       q_i > q_i^{\prime} & (\text{$2 \mid i$ and $i \ge 2$}).
     \end{cases} \]
\end{proposition}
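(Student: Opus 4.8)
The plan is to reduce the comparison of $\alpha$ and $\alpha'$ to a comparison of the tails $A_i$ and $A_i'$, and then to convert that into the comparison of $q_i$ and $q_i'$ by invoking Lemma \ref{lem_appro_frac}. Note first that $i$ is well defined: if $q_j=q_j'$ for every $j$, then (\ref{gene_alt_syl}) forces $\alpha=\alpha'$. I would dispose of the case $i=0$ by hand: there $q_0=[\alpha]$ and $q_0'=[\alpha']$ are integers and $\alpha=q_0+A_1$, $\alpha'=q_0'+A_1'$ with $A_1,A_1'\in[0,1)$ by the definition of the expansion, so $q_0<q_0'$ gives $\alpha<q_0+1\le q_0'\le\alpha'$, and symmetrically $q_0>q_0'$ gives $\alpha>\alpha'$; since $q_0\ne q_0'$, this is exactly the asserted equivalence.

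For $i\ge 1$ we have $q_j=q_j'$ for $0\le j\le i-1$. The key step is the identity
\[ \alpha-\alpha'=(-1)^{i-1}(A_i-A_i'). \]
To obtain it, split the series (\ref{gene_alt_syl}) at its $i$-th term and use Remark \ref{rem_An} in the form $\sum_{n=i}^{\infty}(-1)^{n-1}q_n=(-1)^{i-1}A_i$, which comes from multiplying (\ref{eq_An}) by $(-1)^{i-1}$. This yields $\alpha=q_0+\sum_{n=1}^{i-1}(-1)^{n-1}q_n+(-1)^{i-1}A_i$ and the analogous formula for $\alpha'$, and the initial segments cancel because $q_j=q_j'$ for $j\le i-1$. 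Consequently $\alpha<\alpha'$ is equivalent to $A_i<A_i'$ when $i$ is odd, and to $A_i>A_i'$ when $i\ge 2$ is even.

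It then remains to check that, under the hypothesis $q_i\ne q_i'$, the sign of $A_i-A_i'$ agrees with that of $q_i-q_i'$. Since $A_1\in[0,1)$ and $\{A_n\}$ is non-increasing by Proposition \ref{prop_fund_propt}(3), all $A_n$ lie in $[0,1)$; and $q_i\ne q_i'$ forces $A_i\ne A_i'$ because $q_i$ is determined by $A_i$ and $c_i$. If $A_i,A_i'>0$, they lie in $(0,1]$, and Lemma \ref{lem_appro_frac} applied with $c=c_i$, $d=a_i$, $d'=a_i'$ gives $A_i<A_i'\iff c_i/a_i<c_i/a_i'\iff q_i<q_i'$. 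If instead $A_i=0$, then $q_i=0$, so $q_i\ne q_i'$ forces $q_i'>0$, hence $A_i'>0$, and both $A_i<A_i'$ and $q_i<q_i'$ hold; the case $A_i'=0$ is symmetric. In every case the two signs agree, and combining this with the displayed identity completes the argument.

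The computation is routine apart from the degenerate (terminating) cases, which I expect to be the only delicate point. One should observe that $q_j=q_j'$ for $j<i$ already excludes $A_j=0$ for $1\le j\le i-1$ on either side: if some $A_j=0$ then $q_j=q_{j+1}=\cdots=0$, and equality of the $q$'s up to index $i$ would then force $q_i=q_i'=0$, contradicting $q_i\ne q_i'$. The same reasoning shows $A_i$ and $A_i'$ cannot both vanish, so that the split at index $i$ and the case analysis above are legitimate.
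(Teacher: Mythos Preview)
Your proof is correct and follows essentially the same route as the paper's: split $\alpha$ and $\alpha'$ at index $i$ using Remark~\ref{rem_An} to reduce to comparing $A_i$ with $A_i'$, then invoke Lemma~\ref{lem_appro_frac} (together with Proposition~\ref{prop_fund_propt}(1)) to convert this to the comparison of $q_i$ with $q_i'$. Your treatment is in fact a bit more thorough than the paper's, which applies Lemma~\ref{lem_appro_frac} without explicitly addressing the terminating case $A_i=0$ or $A_i'=0$; your separate handling of that case is a welcome clarification.
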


\begin{proof}
First, we consider the case $i=0$. If $\alpha < \alpha^{\prime}$, then
we have $q_0 = [\alpha] \le [\alpha^{\prime}] = q_0^{\prime}$.
Therefore we obtain $q_0 < q_0^{\prime}$.
On the other hand, if $q_0 < q_0^{\prime}$,
then we have $[\alpha] < [\alpha^{\prime}]$. Therefore we obtain
$\alpha < \alpha^{\prime}$.

Next, we assume $i \neq 0$. Then we can write
\begin{equation}  \label{rel_series}
  \alpha = q_0 + \sum_{k=1}^{i-1} (-1)^{k-1} q_k
    + (-1)^{i-1} A_i, \qquad
    \alpha^{\prime} = q_0 + \sum_{k=1}^{i-1} (-1)^{k-1} q_k
    + (-1)^{i-1} A_i^{\prime}
\end{equation}
by Remark \ref{rem_An}. These relations imply that
$\alpha < \alpha^{\prime}$ is equivalent to
\[ \begin{cases}
     A_i < A_i^{\prime} & (2 \nmid i),  \\
     -A_i < -A_i^{\prime} & (\text{$2 \mid i$ and $i \ge 2$}).
   \end{cases} \]
By Proposition \ref{prop_fund_propt} (1) and
Lemma \ref{lem_appro_frac}, this is equivalent to
\[ \begin{cases}
     q_i < q_i^{\prime} & (2 \nmid i),  \\
     q_i > q_i^{\prime} & (\text{$2 \mid i$ and $i \ge 2$}).
   \end{cases} \]
This implies the proposition.
\end{proof}

\vspace{2ex}

In order to consider the case $\alpha \in \mathbb{Q}$
we prove the next lemma.

\begin{lemma}  \label{lem_ratio}
Let $c \in \mathbb{N}$ and $p/q \in \mathbb{Q} \cap (0,1]$ with
$p,q \in \mathbb{N}$. Let $d = [cq/p]$. Then the numerator
of $c/d - p/q$ is less than $p$. In other words, $cq - dp < p$.
\end{lemma}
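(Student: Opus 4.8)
The plan is to read off the inequality directly from the definition of the floor function. Since $d = [cq/p]$, we have $d \le cq/p < d+1$. Because $p > 0$, multiplying through by $p$ preserves both inequalities, giving $dp \le cq < dp + p$. Subtracting $dp$ yields $0 \le cq - dp < p$, which is exactly the assertion $cq - dp < p$; the left-hand inequality is a bonus, recording that $c/d - p/q \ge 0$, so that $cq - dp$ is a genuine (nonnegative) numerator.

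Alternatively, one may route the argument through Lemma \ref{lem_int_frac}~(2) applied with $\alpha = p/q \in (0,1]$ and the integer $c$: since $cq/p = c\alpha^{-1}$, the hypothesis $d = [cq/p]$ is equivalent to $c/(d+1) < p/q \le c/d$. The right-hand inequality gives $dp \le cq$, and the left-hand inequality gives $cq < (d+1)p$, that is, $cq - dp < p$. Either approach is a two-line computation.

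There is essentially no obstacle here. The only incidental points worth noting are that $d \ge 1$, so that $c/d$ is well defined: this follows from $p/q \le 1$, hence $q/p \ge 1$ and $cq/p \ge c \ge 1$, so $d = [cq/p] \ge c \ge 1$. Finally, that $cq - dp$ really is the numerator of $c/d - p/q$ (before any cancellation) is immediate from the identity $c/d - p/q = (cq - dp)/(dq)$.
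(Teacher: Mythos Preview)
Your proof is correct and follows essentially the same idea as the paper's: both extract the inequality $cq - dp < p$ directly from the definition of $d = [cq/p]$. The paper writes it via the fractional part, $cq - dp = \{cq/p\}\,p \le \tfrac{p-1}{p}\,p = p-1$, thereby obtaining the marginally sharper bound $cq - dp \le p-1$, but this is the same computation as your $dp \le cq < dp + p$ after multiplying $d \le cq/p < d+1$ by $p$.
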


\begin{proof}
We have
\[ cq-dp = cq - \Bigl( \frac{cq}{p} -
     \Bigl\{ \frac{cq}{p} \Bigr\} \Bigr) p \le \frac{p-1}{p} p
     = p - 1, \]
where $\{x\} = x - [x]$.
\end{proof}

\begin{proposition}  \label{prop_ratio}
The real number $\alpha$ is rational if and only if
there exists an $m \in \mathbb{N}$ such that $q_m = 0$.
\end{proposition}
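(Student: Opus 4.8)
The plan is to prove both directions. The easy direction is: if $q_m = 0$ for some $m \in \mathbb{N}$, then $A_m = 0$ (since $q_m = 0$ forces $A_m = 0$ by the definition of $q_m$), and then by Remark \ref{rem_An} the series terminates, giving $\alpha = q_0 + \sum_{k=1}^{m-1} (-1)^{k-1} q_k$, a finite sum of rationals, hence $\alpha \in \mathbb{Q}$.

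For the converse, I would argue by contradiction: suppose $\alpha \in \mathbb{Q}$ but $q_n \neq 0$ for all $n \in \mathbb{N}$, equivalently $A_n \neq 0$ for all $n$. Using \eqref{eq_An}, each $A_n$ is a positive rational, and from $A_{n+1} = q_n - A_n = c_n/a_n - A_n$ one sees inductively that if $A_n = p_n/q_n$ in lowest terms then $A_{n+1}$ has numerator $c_n q_n - a_n p_n$ over a denominator dividing $a_n q_n$; the key point is Lemma \ref{lem_ratio}, which (applied with $c = c_n$, $p/q = A_n$, $d = a_n = [c_n q_n / p_n]$) gives that the numerator of $A_{n+1}$ is strictly less than the numerator of $A_n$. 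So writing $A_n$ with a fixed positive integer numerator $r_n$ (taking the reduced numerator, or more carefully tracking an unreduced but still-integer numerator), we get a strictly decreasing sequence $r_1 > r_2 > r_3 > \cdots$ of positive integers, which is impossible. Hence some $A_m = 0$, i.e.\ some $q_m = 0$.

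The main obstacle is bookkeeping with the denominators: Lemma \ref{lem_ratio} is stated for a fraction $p/q$ and concludes $cq - dp < p$, but in the recursion the natural denominator of $A_{n+1}$ is $a_n q$, not $q$, so $A_{n+1} = (c_n q - a_n p)/(a_n q)$ need not be in lowest terms. I would handle this by letting $r_n$ be the numerator of $A_n$ \emph{after reduction to lowest terms}: then $r_{n+1}$ divides $c_n q - a_n p$, which by Lemma \ref{lem_ratio} is a nonnegative integer strictly less than $p$; since $r_n \le p$ (as $r_n/s_n = A_n = p/q$ in lowest terms forces $r_n \le p$), we obtain $r_{n+1} \le c_n q - a_n p < p$ — but I need $r_{n+1} < r_n$, not merely $r_{n+1} < p$. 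To get this cleanly I would instead fix once and for all a representation $A_1 = P/Q$ with $P, Q \in \mathbb{N}$ (not necessarily reduced), and prove by induction that $A_n = P_n/Q$ for integers $P_n$ with the \emph{same} denominator $Q$: indeed $A_{n+1} = c_n/a_n - P_n/Q$, and since $a_n = [c_n Q / P_n] $ divides into things appropriately — actually $a_n \mid c_n Q$ need not hold either. The robust fix: clear denominators globally. Write $\alpha - q_0 = A_1 = u_1/v$ with $v = $ a common denominator chosen so that $v$ is divisible by every $a_n$ that will ever occur — but the $a_n$ are not known in advance.

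So the genuinely clean approach is the reduced-numerator one together with the observation that Proposition \ref{prop_fund_propt}(3) gives $A_{n+1} < A_n$, hence the reduced numerators satisfy $r_{n+1}/s_{n+1} < r_n/s_n$; combined with $r_{n+1} < p_1$ (the original numerator) being bounded this still does not immediately force a decrease. I therefore expect the intended argument is: $A_{n+1} = (c_n q_n - a_n p_n)/(a_n q_n)$ where $A_n = p_n/q_n$ is reduced, the reduced numerator $r_{n+1}$ of $A_{n+1}$ divides $c_n q_n - a_n p_n$, and by Lemma \ref{lem_ratio} $c_n q_n - a_n p_n < p_n = r_n$; hence $r_{n+1} \le c_n q_n - a_n p_n < r_n$, and $r_{n+1} \ge 1$ since $A_{n+1} \neq 0$. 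This is a strictly decreasing sequence of positive integers, the desired contradiction. The one subtlety to verify is that $c_n q_n - a_n p_n$ is indeed $\ge 1$ (it is positive because $A_{n+1} > 0$) and that reduction can only decrease the numerator, which is immediate. I would present exactly this, citing Lemma \ref{lem_ratio} for the strict inequality and Remark \ref{rem_An}/Proposition \ref{prop_fund_propt} for termination and positivity.
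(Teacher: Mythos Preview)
Your final argument---writing $A_n = p_n/q_n$ in lowest terms, observing that $A_{n+1} = (c_n q_n - a_n p_n)/(a_n q_n)$ so that the reduced numerator $r_{n+1}$ divides the nonnegative integer $c_n q_n - a_n p_n$, and then invoking Lemma~\ref{lem_ratio} to get $r_{n+1} \le c_n q_n - a_n p_n < p_n = r_n$---is correct and is exactly the argument the paper has in mind (the paper states it in one sentence, leaving the reduced-numerator bookkeeping implicit). You should discard the intermediate false starts and present only this version; also, avoid reusing the symbol $q_n$ for the denominator of $A_n$, since it clashes with the paper's $q_n = c_n/a_n$.
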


\begin{proof}
If there exists an $m \in \mathbb{N}$ such that $q_m = 0$,
then $\alpha$ is rational.
We assume $\alpha = p/q$, where $p,q \in \mathbb{Z}$ and $q \neq 0$.
Without loss of generality, we may assume that $q_0 = 0$,
$A_1 = p/q$ and $p,q > 0$. By the definition of $a_n, A_n$
and Lemma \ref{lem_ratio}, the numerator of $A_n$ is
strictly monotonically decreasing. This implies the
proposition.
\end{proof}

\vspace{2ex}

Propositions \ref{prop_fund_propt}, \ref{prop_ord} and
\ref{prop_ratio} imply that the generalized alternating-Sylvester
series is similar to alternating-Sylvester series.

\section{construction of the real numbers}
In this section we take an arbitrary sequence of positive
integers $\{c_n\}_{n=1}^{\infty}$ which satisfies the condition
$c_n \mid c_{n+1}$ for all $n \in \mathbb{N}$ and fix it.
Moreover we identify $\{q_n\}_{n=0}^{\infty} \in S(\{c_n\})$
with $(q_0, q_1, q_2, \dots)$.

\begin{remark}  \label{rem_cn}
On the condition $c_n \mid c_{n+1}$ for any $n \in \mathbb{N}$,
the inequality in Proposition \ref{prop_fund_propt} (2)
becomes
\[ a_{n+1} \ge \frac{c_{n+1}}{c_n} a_n (a_n + 1). \]
If the equality holds in the above and $q_{n+2} = 0$, then we have
\[ A_n = q_n - q_{n+1} = \frac{c_n}{a_n + 1}. \]
This contradicts the definition of $q_n$, hence
$q_{n+2} \neq 0$ or
\[ a_{n+1} > \frac{c_{n+1}}{c_n} a_n (a_n + 1) \]
holds.
\end{remark}

In Section 1, we assumed the existence of the real numbers,
and we defined $S(\{c_n\})$ in (\ref{def_scn}). In order to
use $S(\{c_n\})$ for the construction of the real numbers,
here we remove that assumption.

\begin{definition} \label{def_tcn}
Let $\{a_n\}_{n=1}^{\infty}$ be a sequence of positive integers.
We define $\{a_n\} \in U(\{c_n\})$  if and only if
\begin{equation} \label{ineq_an_ucn}
  a_{n+1} \ge \frac{c_{n+1}}{c_n} a_n (a_n + 1)
\end{equation}
holds for all $n \in \mathbb{N}$.

Let $\{q_n\}_{n=0}^{\infty}$ be a sequence of rational numbers.
We define $\{q_n\} \in T(\{c_n\})$ if and only if

\begin{enumerate}
\item $q_0 \in \mathbb{Z}$,

\item $q_n \le 1$ for all $n \in \mathbb{N}$,

\item if $q_1 = 1$, then $q_2 \neq 0$,

\item if $q_m = 0$ for $m \in \mathbb{N}$, then $q_n = 0$ for all $n \ge m$,

\item there exists a $\{a_n\} \in U(\{c_n\})$ such that
$q_n = c_n/a_n$ for all $n \in \mathbb{N}$ if $q_n \neq 0$, and

\item if $q_{n+1} \neq 0$, then $q_{n+2} \neq 0$ or
\[ a_{n+1} > \frac{c_{n+1}}{c_n} a_n (a_n + 1) \]
\end{enumerate}
holds.
\end{definition}

\vspace{2ex}

We can easily see that the following lemma holds.

\begin{lemma}  \label{lem_tcn}
Let $\{q_n\} \in T(\{c_n\})$ and $n \in \mathbb{N}$.

\begin{enumerate}
\item $a_{n+1} > a_n$.
\item $q_{n+1} \le \frac{1}{a_n + 1}$.
\item $q_{n+1} \le q_n$. If $q_{n+1} \neq 0$, then
$q_{n+1} < q_n$.
\item The series
\[ \sum_{k=1}^{\infty} (-1)^{k-1} q_k \]
converges.
\end{enumerate}
\end{lemma}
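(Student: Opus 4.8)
The plan is to read off items (1)--(3) directly from the defining inequality (\ref{ineq_an_ucn}) of $U(\{c_n\})$, the hypothesis $c_n \mid c_{n+1}$ (which makes $c_{n+1}/c_n$ a positive integer, so $c_{n+1}/c_n \ge 1$), and the normalization $q_n \le 1$ from Definition \ref{def_tcn}(2); item (4) is then the Leibniz alternating-series test, whose hypotheses are furnished by (1)--(3) once we check $q_k \to 0$.

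For (1), fix the witness $\{a_n\} \in U(\{c_n\})$ provided by Definition \ref{def_tcn}(5). Since each $a_n$ is a positive integer we have $a_n + 1 \ge 2$, and since $c_{n+1}/c_n \ge 1$, (\ref{ineq_an_ucn}) gives $a_{n+1} \ge \frac{c_{n+1}}{c_n} a_n(a_n+1) \ge a_n(a_n+1) > a_n$; note this holds for every $n$, regardless of whether $q_n$ vanishes, so $\{a_n\}$ is strictly increasing. For (2), the case $q_{n+1} = 0$ is immediate; if $q_{n+1} \ne 0$ then also $q_n \ne 0$ by Definition \ref{def_tcn}(4), whence $q_n = c_n/a_n$ and $q_{n+1} = c_{n+1}/a_{n+1}$, and combining $q_n \le 1$ (so $c_n \le a_n$) with (\ref{ineq_an_ucn}) yields
\[
  q_{n+1} = \frac{c_{n+1}}{a_{n+1}} \le \frac{c_n}{a_n(a_n+1)} \le \frac{1}{a_n+1}.
\]
For (3), again $q_{n+1}=0$ is trivial since $q_n \ge 0$; if $q_{n+1} \ne 0$ then by (2) and $q_n = c_n/a_n \ge 1/a_n$ we get $q_{n+1} \le \frac{1}{a_n+1} < \frac{1}{a_n} \le q_n$.

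For (4), the terms of $\sum_{k\ge 1}(-1)^{k-1}q_k$ are nonnegative and, by (3), nonincreasing; and $q_k \to 0$ because either some $q_m = 0$, in which case $q_k = 0$ for all $k \ge m$ by Definition \ref{def_tcn}(4) and the series is a finite sum, or else all $q_k \ne 0$, in which case (2) together with $a_k \to \infty$ (from the strict monotonicity in (1)) forces $q_{k+1} \le 1/(a_k+1) \to 0$. Leibniz's criterion then gives convergence. No step is really difficult; the only place demanding a little care is that in Definition \ref{def_tcn}(5) the values $a_n$ are tied to the $q_n$ only at indices with $q_n \ne 0$, so (1) should be stated for the chosen witness sequence and the $q_n = 0$ cases of (2) and (3) should be disposed of separately, exactly as above.
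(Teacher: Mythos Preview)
Your proof is correct. The paper itself omits the proof entirely (it states only ``We can easily see that the following lemma holds''), and your argument is exactly the natural filling-in of the details: items (1)--(3) follow directly from (\ref{ineq_an_ucn}), the divisibility hypothesis $c_n \mid c_{n+1}$, and Definition~\ref{def_tcn}(2),(4),(5), while (4) is the Leibniz criterion once (1)--(3) supply monotonicity and $q_k \to 0$. Your care in treating the $q_{n+1}=0$ cases separately and in noting that the witness sequence $\{a_n\}$ from Definition~\ref{def_tcn}(5) is defined at all indices (not just where $q_n \ne 0$) is appropriate.
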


\begin{proposition}  \label{prop_tcn}
$S(\{c_n\}) = T(\{c_n\})$.
\end{proposition}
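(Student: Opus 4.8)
The plan is to prove the set equality $S(\{c_n\}) = T(\{c_n\})$ by establishing the two inclusions separately, treating this as a characterization of which sequences of rationals arise from the generalized alternating-Sylvester expansion. For the inclusion $S(\{c_n\}) \subseteq T(\{c_n\})$, I would start from an arbitrary $\alpha \in \mathbb{R}$ and its expansion $\{q_n\}$, and verify conditions (1)--(6) of Definition \ref{def_tcn} one at a time. Condition (1) is immediate since $q_0 = [\alpha] \in \mathbb{Z}$; conditions (2) and (4) follow from Proposition \ref{prop_fund_propt} (4) and Proposition \ref{prop_ratio} respectively; condition (3) needs a short direct check (if $q_1 = c_1/a_1 = 1$ then $A_1 = 1$, forcing $A_2 \ne 0$ by the definition of $a_1$, since $a_1 = [c_1/A_1] = c_1$ would give $q_1 = c_1/c_1 = 1$ and $A_2 = 1 - 1 = 0$ only in degenerate cases — one has to chase the definitions carefully here); condition (5) is the statement that the $a_n$'s produced by the algorithm satisfy \eqref{ineq_an_ucn}, which is exactly Remark \ref{rem_cn} (the sharpened form of Proposition \ref{prop_fund_propt} (2) under $c_n \mid c_{n+1}$); and condition (6) is the final assertion of Remark \ref{rem_cn}. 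So this direction is essentially a bookkeeping exercise collecting facts already proved.

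For the reverse inclusion $T(\{c_n\}) \subseteq S(\{c_n\})$, the real content lies: given a sequence $\{q_n\} \in T(\{c_n\})$, I must exhibit a real number $\alpha$ whose generalized alternating-Sylvester expansion is precisely $\{q_n\}$. The natural candidate is
\[ \alpha := q_0 + \sum_{n=1}^{\infty} (-1)^{n-1} q_n, \]
which is a well-defined real number by Lemma \ref{lem_tcn} (4). I would then run the expansion algorithm of Definition 1.1 on this $\alpha$, producing sequences $\tilde a_n, \tilde A_n, \tilde q_n$, and prove by induction on $n$ that $\tilde q_n = q_n$ (equivalently $\tilde a_n = a_n$) for all $n$, and correspondingly that $\tilde A_n$ equals the tail sum $(-1)^{n-1}\sum_{k=n}^\infty (-1)^{k-1} q_k$. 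The base case sets $\tilde q_0 = [\alpha] = q_0$, which holds because the tail $\sum_{n\ge 1}(-1)^{n-1} q_n$ lies in $[0,1)$ — this uses Lemma \ref{lem_tcn} (3), the monotone-decreasing alternating structure, together with condition (3) to rule out the tail being exactly $1$ (which would happen only if $q_1 = 1$ and all later terms vanish, excluded by (3)). For the inductive step, assuming $\tilde A_n = (-1)^{n-1}\sum_{k\ge n}(-1)^{k-1}q_k$, I need $\tilde a_n = [c_n/\tilde A_n] = a_n$; by Lemma \ref{lem_int_frac} (2) this amounts to checking $c_n/(a_n+1) < \tilde A_n \le c_n/a_n$. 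The upper bound is clear since $\tilde A_n = q_n - (\text{nonneg. tail}) \le q_n = c_n/a_n$. The lower bound requires showing the subtracted tail $\sum_{k\ge n+1}(-1)^{k}q_k$ (an alternating sum starting positive) is less than $c_n/a_n - c_n/(a_n+1) = c_n/(a_n(a_n+1))$, which follows from $q_{n+1} \le 1/(a_n+1)$ (Lemma \ref{lem_tcn} (2)), except that one must handle the boundary case where equality $\tilde A_n = c_n/(a_n+1)$ threatens: this is exactly where condition (6) of Definition \ref{def_tcn} (the strict-inequality-or-$q_{n+2}\ne 0$ clause) is needed, mirroring the contradiction derived in Remark \ref{rem_cn}. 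Once $\tilde a_n = a_n$, the recursion gives $\tilde q_n = c_n/a_n = q_n$ and $\tilde A_{n+1} = \tilde q_n - \tilde A_n = q_n - (-1)^{n-1}\sum_{k\ge n}(-1)^{k-1}q_k = (-1)^n\sum_{k\ge n+1}(-1)^{k-1}q_k$, closing the induction. One also treats the terminating case $q_m = 0$ separately but easily: then $\alpha$ is the finite rational sum and the algorithm halts at step $m$ with $A_m = 0$, matching $q_m = 0$.

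The main obstacle I anticipate is the careful handling of the boundary/equality cases in the inductive step of the second inclusion — precisely the situations where $\tilde A_n$ could coincide with the endpoint $c_n/(a_n+1)$ and thereby produce the wrong integer part $[c_n/\tilde A_n] = a_n + 1$ instead of $a_n$. The definition of $T(\{c_n\})$ has been engineered (conditions (3) and (6)) exactly to forbid these pathological sequences, so the proof must invoke those conditions at the right moments; getting the alternating-tail estimates tight enough to see that the strict inequality in (6) rescues the argument, and organizing the induction so that terminating and non-terminating sequences are handled uniformly, is the delicate part. Everything else reduces to reassembling Proposition \ref{prop_fund_propt}, Lemma \ref{lem_tcn}, Lemma \ref{lem_int_frac} and Remark \ref{rem_cn}.
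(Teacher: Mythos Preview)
Your proposal is correct and follows essentially the same route as the paper: the inclusion $S(\{c_n\}) \subseteq T(\{c_n\})$ is dispatched by collecting Proposition~\ref{prop_fund_propt} and Remark~\ref{rem_cn}, and the reverse inclusion is handled by defining $\alpha$ as the alternating sum of the given $\{q_n\}$, running the algorithm on $\alpha$, and proving inductively via Lemma~\ref{lem_int_frac}(2) that the algorithm reproduces $\{q_n\}$, with conditions (3) and (6) of Definition~\ref{def_tcn} invoked precisely to exclude the boundary cases $A_1 = 1$ and $A_n = c_n/(a_n+1)$. Two small slips to clean up: condition (4) of Definition~\ref{def_tcn} follows directly from the recursion $A_{m+1} = q_m - A_m$ (once $A_m = 0$ everything thereafter vanishes), not from Proposition~\ref{prop_ratio}; and in your sketch of condition (3) for the forward inclusion, $q_1 = 1$ does \emph{not} force $A_1 = 1$ (indeed $A_1 < 1$ always), but rather $c_1/(c_1+1) < A_1 < 1$, whence $A_2 = 1 - A_1 > 0$ and so $q_2 \neq 0$.
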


\begin{proof}
$S(\{c_n\}) \subset T(\{c_n\})$ trivially follows by Proposition
\ref{prop_fund_propt} and Remark \ref{rem_cn}. In order to
prove $S(\{c_n\}) \supset T(\{c_n\})$, we take
$\{q_n^{\prime}\} \in T(\{c_n\})$ and assume that
$q_0^{\prime} \in \mathbb{Z}$ and $q_n^{\prime} = 0$ or
$q_n^{\prime} = c_n/a_n^{\prime}$ for all $n \in \mathbb{N}$.
Since we can set
\[ \alpha = q_0^{\prime} +
     \sum_{k=1}^{\infty} (-1)^{k-1} q_k^{\prime} \]
by Lemma \ref{lem_tcn} (4), we have
\[ \alpha = q_0 + \sum_{k=1}^{\infty} (-1)^{k-1} q_k \]
by the generalized alternating-Sylvester expansion.
It is sufficient to prove that $q_n = q_n^{\prime}$
for all $n \in \mathbb{N} \cup \{0\}$. Since the case
$q_1^{\prime} = 0$ is trivial, we may assume $q_1^{\prime} \neq 0$.
By considering $[\alpha]$, we have $q_0 = q_0^{\prime}$ and
$\alpha - q_0 = A_1 \le q_1^{\prime} = c_1/a_1^{\prime}$.
If $q_1^{\prime} = A_1$, then $q_1 = q_1^{\prime}$
by Lemma \ref{lem_int_frac} (2).
If $q_1^{\prime} \neq A_1$, then we have
$A_1 \ge q_1^{\prime} - q_2^{\prime} \ge c_1/(a_1^{\prime} + 1)$ by
(\ref{ineq_an_ucn}) and Definition \ref{def_tcn} (5).
However, $A_1 = q_1^{\prime} - q_2^{\prime} =
c_1/(a_1^{\prime} + 1)$ is impossible because of
Definition \ref{def_tcn} (6). Thus we obtain
$c_1/(a_1^{\prime} + 1) < A_1 < c_1/a_1^{\prime}$.
This implies $q_1 = q_1^{\prime}$ by Lemma
\ref{lem_int_frac} (2).

Next we suppose that $q_{n-1} = q_{n-1}^{\prime}$ holds
for $n > 1$. Then we have
\[ (-1)^{n-1} A_n =
   \alpha - q_0 - \sum_{k=1}^{n-1} (-1)^{k-1} q_k =
   \sum_{k=n}^{\infty} (-1)^{k-1} q_k^{\prime} \]
by Remark \ref{rem_An}. Hence we obtain
$A_n \le q_n^{\prime} = c_n/a_n^{\prime}$.
If $q_n^{\prime} = A_n$, then $q_n^{\prime} = q_n$
by Lemma \ref{lem_int_frac} (2).
If $q_n^{\prime} \neq A_n$, then we have
$A_n \ge q_n^{\prime} - q_{n+1}^{\prime} \ge c_n/(a_n^{\prime} + 1)$
by (\ref{ineq_an_ucn}) and Definition \ref{def_tcn} (5).
By Definition \ref{def_tcn} (6) we obtain
$A_n > c_n/(a_n^{\prime} + 1)$.
Since this implies $q_n = q_n^{\prime}$, we obtain
the assertion of the proposition inductively.
\end{proof}

In the rest of this section, we set $S = S(\{c_n\})$ for
simplicity, and we introduce a relation $<$ and operators $+$,
$\cdot$ for $S$.

First we define the binary relation $<$ on $S$.

\begin{definition}  \label{def_ord}
Let $\{p_n\}, \{q_n\} \in S$ with
$\{p_n\} \neq \{q_n\}$ and
\[ i = \min\{ j \in \mathbb{N} \cup \{0\} \mid p_j \neq q_j \}. \]
We define $\{p_n\} < \{q_n\}$ if and only if
\[   \begin{cases}
       p_0 < q_0 & (i = 0),  \\
       p_i < q_i & (2 \nmid i),  \\
       p_i > q_i & (\text{$2 \mid i$ and $i \ge 2$}).
     \end{cases} \]
\end{definition}

\begin{proposition}  \label{prop_ord_cons}
For any $\{p_n\}, \{q_n\}, \{r_n\} \in S$,
we have

\begin{enumerate}
\item $\{p_n\} < \{p_n\}$ does not hold (irreflexive law),
\item $\{p_n\} < \{q_n\}$ or $\{p_n\} = \{q_n\}$ or
$\{q_n\} < \{p_n\}$ (trichotomy),
\item if $\{p_n\} < \{q_n\}$ and $\{q_n\} < \{r_n\}$
then $\{p_n\} < \{r_n\}$ (transitive law).
\end{enumerate}

In other words, $<$ is a linear order in the strict sense on $S$.
\end{proposition}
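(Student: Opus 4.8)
The plan is to verify the three order axioms directly from Definition \ref{def_ord}, reducing each one to a statement about the integer index $i$ at which two sequences first differ. Throughout I will use the description of $S(\{c_n\})$ provided by Proposition \ref{prop_tcn}: every element of $S$ is a sequence $\{q_n\}$ with $q_0 \in \mathbb{Z}$, $q_n \le 1$, and (when nonzero) $q_n = c_n/a_n$ for a strictly increasing sequence $\{a_n\} \in U(\{c_n\})$. Irreflexivity (1) is immediate, since $\{p_n\} < \{p_n\}$ would require an index $i$ with $p_i \neq p_i$, which is vacuous; so the relation is irreflexive. Trichotomy (2) is nearly as quick: if $\{p_n\} \neq \{q_n\}$, the index $i = \min\{j \mid p_j \neq q_j\}$ exists, and at that index exactly one of $p_i < q_i$, $p_i > q_i$ holds (these are rational numbers), so by the case analysis in Definition \ref{def_ord} exactly one of $\{p_n\} < \{q_n\}$, $\{q_n\} < \{p_n\}$ holds; together with the case $\{p_n\} = \{q_n\}$ this gives trichotomy.

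The substantive part is transitivity (3). Suppose $\{p_n\} < \{q_n\}$ and $\{q_n\} < \{r_n\}$, and let $i$ be the first index where $\{p_n\}$ and $\{q_n\}$ differ and $j$ the first index where $\{q_n\}$ and $\{r_n\}$ differ. Set $k = \min\{i,j\}$; then $p_k, q_k, r_k$ agree for all indices below $k$, and $k$ is the first index where $\{p_n\}$ and $\{r_n\}$ can differ — though I must also handle the possibility that $p_k = r_k$, in which case the first differing index of $\{p_n\}, \{r_n\}$ is some $\ell > k$ and a further argument is needed. The clean way to organize this is to introduce a ``sign'' $\varepsilon_i \in \{+1,-1\}$ (namely $\varepsilon_0 = \varepsilon_i = +1$ for $i$ odd, $\varepsilon_i = -1$ for $i$ even $\ge 2$) so that Definition \ref{def_ord} reads: $\{p_n\} < \{q_n\}$ iff $\varepsilon_i (p_i - q_i) < 0$ at the first differing index $i$. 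The key observation to prove is a comparison lemma: if $p_n = q_n$ for $n < i$ and $\varepsilon_i(p_i - q_i) < 0$, then in fact $\varepsilon_i\bigl(\sum_{n\ge i}(-1)^{n-1}p_n - \sum_{n\ge i}(-1)^{n-1}q_n\bigr) < 0$ — that is, the first-difference comparison correctly predicts the comparison of the corresponding real partial sums (or ``tails''). This is exactly the content underlying Proposition \ref{prop_ord}: using Remark \ref{rem_An}, the tail $(-1)^{i-1}\sum_{n\ge i}(-1)^{n-1}q_n$ equals $A_i$, and Proposition \ref{prop_fund_propt}(1) together with Lemma \ref{lem_appro_frac} translate the ordering of $q_i = c_i/a_i$ into the ordering of $A_i$. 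Once this comparison lemma is in hand, transitivity follows because $<$ on $S$ is pulled back from the genuine (transitive) order on $\mathbb{R}$ via the map $\{q_n\} \mapsto q_0 + \sum_{n\ge 1}(-1)^{n-1}q_n$.

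I expect the main obstacle to be precisely the case distinctions in that comparison lemma, in particular the boundary behaviour when $p_i$ or $q_i$ equals $0$ (the sequence ``terminates'') versus when it is a genuine fraction $c_i/a_i$; here Definition \ref{def_tcn}(3),(4),(6) and Lemma \ref{lem_tcn} are needed to rule out the degenerate possibility that two distinct $T(\{c_n\})$-sequences map to the same real number, which would break injectivity and hence the pullback argument. Concretely, I would first prove (or cite, since it is essentially Proposition \ref{prop_tcn}'s uniqueness part) that the map $\Phi\colon S \to \mathbb{R}$, $\Phi(\{q_n\}) = q_0 + \sum_{n\ge 1}(-1)^{n-1}q_n$, is a bijection onto $\mathbb{R}$, and that $\{p_n\} < \{q_n\}$ as in Definition \ref{def_ord} is equivalent to $\Phi(\{p_n\}) < \Phi(\{q_n\})$ — this last equivalence is Proposition \ref{prop_ord} restated via $\Phi$. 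Then all three axioms (1)--(3) transfer from the corresponding well-known facts for $<$ on $\mathbb{R}$, and the proof is complete.
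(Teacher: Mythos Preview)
Your handling of (1) and (2) matches the paper's. For transitivity (3), however, you take a much longer route and in doing so miss the elementary observation that makes the paper's direct argument work.

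With $k = \min\{i,j\}$ as you define it, the possibility $p_k = r_k$ that you flag as needing separate treatment \emph{never arises}. For $n < k$ all three sequences agree. At index $k$ there are three cases: if $k = i < j$ then $p_k \neq q_k = r_k$; if $k = j < i$ then $p_k = q_k \neq r_k$; and if $k = i = j$ then the hypotheses give $p_k < q_k < r_k$ (for $k$ odd or $k=0$) or $p_k > q_k > r_k$ (for $k \ge 2$ even). In every case $p_k \neq r_k$, so $k$ is exactly the first index where $\{p_n\}$ and $\{r_n\}$ differ, and the required inequality $\varepsilon_k(p_k - r_k) < 0$ holds automatically. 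That three-line case split is the paper's entire proof of (3); no comparison lemma, no tails, no injectivity argument.

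Your fallback strategy---pulling the order back from $\mathbb{R}$ via the bijection $\Phi(\{q_n\}) = q_0 + \sum_{n \ge 1}(-1)^{n-1}q_n$ and citing Proposition \ref{prop_ord}---is mathematically sound, but it undermines the purpose of Section 3. The paper explicitly removes the standing assumption that $\mathbb{R}$ exists just before Definition \ref{def_tcn}, and from that point onward builds the order and field structure on $S$ intrinsically, using only rational arithmetic; Proposition \ref{prop_ord_cons} is one of the first steps of that program. Invoking the order on an already-constructed $\mathbb{R}$ to verify that $<$ is a linear order on $S$ is circular in this setting, even though each ingredient you cite is individually correct.
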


\begin{proof}
We can easily see that (1) and (2) hold. In order to prove (3), we define
\[ i_1 = \min \{ j \in \mathbb{N} \cup \{0\} \mid p_j \neq q_j \},
   \qquad
   i_2 = \min \{ j \in \mathbb{N} \cup \{0\} \mid q_j \neq r_j \} \]
and $i = \min \{ i_1, i_2 \}$. Then
\[ p_k = q_k = r_k \quad
     \text{(for any $k \in \{0,1, \dots , i-1\})$} \]
and
\[ p_i \neq q_i \quad \text{or} \quad q_i \neq r_i \]
hold. If $i$ is odd, then we have
\[ \begin{cases}
     \text{$p_i < q_i$ and $q_i < r_i$} & (i = i_1 = i_2),  \\
     \text{$p_i = q_i$ and $q_i < r_i$} & (i = i_2 \neq i_1),  \\
     \text{$p_i < q_i$ and $q_i = r_i$} & (i = i_1 \neq i_2).
   \end{cases} \]
Therefore we obtain $p_i < r_i$. The other cases can be proved by
the same argument.
\end{proof}

\vspace{1ex}

If we define
\[ Q_S = \{ \{q_n\} \in S \mid \text{there exists an $m \in \mathbb{N}$
   such that $q_m = 0$} \}, \]
we can identify $Q_S$ with $\mathbb{Q}$ by
Proposition \ref{prop_ord} and \ref{prop_ratio}. In short, the map
\[
   \mathbb{Q} \ni
   \biggl( q_0 + \sum_{n=1}^{\infty} (-1)^{n-1} q_n \biggr)
   \mapsto \{q_n\} \in Q_S \]
is an order-isomorphism. Hence we may regard as $\mathbb{Q} \subset S$.

\begin{theorem}  \label{th_sup_inf}
Let $M$ be a non-empty subset of $S$. If $M$ is bounded from
above (below), then there exists a supremum (an infimum).
\end{theorem}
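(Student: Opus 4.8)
The plan is to construct the supremum of a bounded-above set $M \subset S$ digit-by-digit, exploiting the fact (Proposition \ref{prop_ord}) that the order on $S$ is determined by the first coordinate where two sequences differ, with an alternating twist: odd coordinates increase the value, even coordinates ($\ge 2$) decrease it. Since $\mathbb{Q} \subset S$ and $S$ carries a strict linear order (Proposition \ref{prop_ord_cons}), and since the infimum case follows from the supremum case applied to $-M$ (once $-$ is available — or more safely, by the symmetric argument), I will treat only the supremum.

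First I would determine $q_0$. Each $\{p_n\} \in M$ has an integer first coordinate $p_0 = [\,\alpha\,]$ where $\alpha$ is the corresponding real; boundedness above forces these $p_0$ to be bounded, so $q_0 := \sup\{p_0 : \{p_n\} \in M\}$ is a well-defined integer. Restrict attention to $M_0 = \{\{p_n\} \in M : p_0 = q_0\}$, which is non-empty. Now proceed recursively: having fixed $q_0, q_1, \dots, q_{n-1}$ and a non-empty set $M_{n-1} \subseteq M$ of elements of $S$ agreeing with these, I look at the possible values of the $n$-th coordinate $p_n$ of elements of $M_{n-1}$. By Lemma \ref{lem_tcn}(3) and Proposition \ref{prop_fund_propt}(1) these lie in $\{0\} \cup \{c_n/a_n : a_n \ge a_{n-1}+1\}$, a well-ordered set of rationals (for the induced order on the $n$-th slot, which is ordinary $<$ if $n$ is odd and $>$ if $n$ is even). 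If $n$ is odd I take $q_n$ to be the supremum of the $p_n$ over $M_{n-1}$; if that supremum is attained I set $M_n = \{\{p_n\} \in M_{n-1} : p_n = q_n\}$, and if it is not attained — the $p_n$ increase toward a limit $c_n/a$ without reaching it — a separate short argument using Lemma \ref{lem_tcn}(2) shows the limit is itself a legitimate value and the tail is forced, so I can cap the construction at $q_n$ and set all later coordinates by the generalized alternating-Sylvester rule (in effect, the supremum equals a truncation). If $n$ is even I do the mirror image, taking the infimum of the $p_n$. In all cases $M_n$ is non-empty or the construction terminates with a rational supremum.

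Once $\{q_n\}$ is built I must check two things: that $\{q_n\} \in S$, and that it is the least upper bound. Membership follows because each coordinate was chosen from the admissible set and the gap conditions of Definition \ref{def_tcn} are inherited (using Remark \ref{rem_cn} and Proposition \ref{prop_tcn} to identify $S = T(\{c_n\})$); the delicate point is clause (6), which I would verify by noting that whenever a coordinate is chosen as a non-attained sup/inf, the subsequent coordinates are defined by the expansion of the resulting real number, which automatically satisfies (6) by Proposition \ref{prop_fund_propt}. That $\{q_n\}$ is an upper bound: given $\{p_n\} \in M$, let $i$ be the first index where $p_i \ne q_i$; by construction $\{p_n\}$ survived into $M_{i-1}$, and the choice of $q_i$ as sup (odd $i$) or inf (even $i$) forces the correct inequality in Definition \ref{def_ord}, so $\{p_n\} \le \{q_n\}$. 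That it is least: if $\{r_n\} < \{q_n\}$, let $j$ be the first disagreement; the defining inequality says $r_j < q_j$ ($j$ odd) or $r_j > q_j$ ($j$ even), but $q_j$ was the extremal admissible value approached by elements of $M_{j-1}$, so some $\{p_n\} \in M_{j-1} \subseteq M$ has $p_j$ strictly beyond $r_j$ in the relevant direction, whence $\{r_n\} < \{p_n\}$ and $\{r_n\}$ is not an upper bound.

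The main obstacle is the case where a coordinate-wise supremum (or infimum) is not attained within $M_{n-1}$: then the naive "agree with $q_n$ and continue" fails because no element of $M$ has that coordinate. I expect the resolution to be that in this situation the candidate value $c_n/a$ together with the forced tail (dictated by Lemma \ref{lem_tcn}(2), which pins down how small subsequent coordinates must be) still produces an element of $S$, and one shows directly that this element is the supremum — essentially because the elements of $M_{n-1}$ form a sequence converging up to it in the real-number sense encoded by Remark \ref{rem_An}. Handling this uniformly, and confirming it respects Definition \ref{def_tcn}(3)--(6), is where the real care is needed; everything else is bookkeeping with the already-established order description.
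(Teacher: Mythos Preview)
Your digit-by-digit construction is exactly the paper's approach: set $d_0$ to the maximal zeroth coordinate appearing in $M$, then alternately take the maximum (odd index) or minimum (even index) of the next coordinate among elements of $M$ matching the current prefix, and verify via Definition~\ref{def_ord} that the resulting $\{d_n\}$ is the least upper bound. The paper adds one escape clause at the outset --- if no upper bound for $M$ in $S$ has zeroth coordinate $d_0$, then $\sup M=(d_0+1,0,0,\dots)$ --- and thereafter simply asserts that each successive extremum exists.

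Two points on your treatment of the non-attained case. First, at \emph{odd} stages the maximum is always attained: the admissible $n$-th coordinates are $0$ together with $c_n/a_n$ for integers $a_n$ above a fixed bound, and any nonempty subset of that set has a largest element (take the least $a_n$ occurring). The obstacle arises only at \emph{even} stages, where the infimum over such a set can be $0$ without $0$ itself occurring. Second, your fix (``cap at $q_n$ and let the expansion rule determine the rest'') is not quite right as stated: writing the unreached infimum as $0$ produces the tuple $(d_0,\dots,d_{k-1},0,0,\dots)$, which can fail condition~(6) of Definition~\ref{def_tcn} when $a_{k-1}$ meets the bound in (\ref{ineq_an_ucn}) with equality; in that situation the rational $d_0+\sum_{j=1}^{k-1}(-1)^{j-1}d_j$ has a canonical expansion whose prefix is \emph{not} $(d_0,\dots,d_{k-1})$ (for instance, with $c_n\equiv 1$, the elements $(0,\tfrac12,\tfrac17,\tfrac1{56},\tfrac1m,0,\dots)$ have supremum $\tfrac38=(0,\tfrac12,\tfrac18,0,\dots)$, not anything beginning $(0,\tfrac12,\tfrac17,\tfrac1{56},\dots)$). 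So the remedy is not to keep the prefix and append zeros, but to re-expand that rational from scratch --- equivalently, to repeat the paper's stage-$0$ escape clause at every level and backtrack when it fails. With that sharpening, your argument and the paper's coincide.
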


\begin{proof}
Since $M$ is bounded from above, there exists a $d_0$ such that
\[ d_0 = \max \{ q_0 \in \mathbb{Z} \mid
     \text{there exists a $(q_0,q_1, \dots) \in M$} \}. \]
If there does not exist an upper bound for $M$ such that
$(d_0, q_1, \dots) \in S$, then $(d_0 + 1, 0, \dots)$ is
a supremum for $M$. We assume that there exists an upper bound
for $M$ such that $(d_0, q_1, \dots) \in S$.
Since there exists a $(q_0,q_1,\dots) \in M$ such that
$q_0 = d_0$, we can define
\[ d_1 = \max \{ q_1 \in \mathbb{Q} \mid
     \text{there exists a $(d_0,q_1,\dots) \in M$} \} \]
from the definition of $S$ and $<$. By the same argument,
we can define
\[ d_2 = \min \{ q_2 \in \mathbb{Q} \mid
     \text{there exists a $(d_0,d_1,q_2,\dots) \in M$} \}. \]
In general, if we have defined $d_{k-1}$ for $k > 1$, then
we define
\[ d_k =
     \begin{cases}
       \max \{ q_k \in \mathbb{Q} \mid
         \exists (d_0,d_1, \dots ,d_{k-1}, q_k, \dots) \in M \} &
         (\text{$k-1$ is even}),  \\
       \min \{ q_k \in \mathbb{Q} \mid
         \exists (d_0,d_1, \dots ,d_{k-1}, q_k, \dots) \in M \} &
         (\text{$k-1$ is odd}).
     \end{cases} \]
By the definition of $<$ and $\{d_n\}$, $\{d_n\}$
is the supremum for $M$. We can prove this as follows.
If $\{d_n\}$ is not an upper bound for $M$, then there
exists a $\{q_n\} \in M$ such that $\{d_n\} < \{q_n\}$.
By setting $i = \min \{ n \in \mathbb{N} \mid d_n \neq q_n \}$,
we have $d_i < q_i$ for odd $i$ or $d_i > q_i$ for even $i$.
This contradicts the definition of $\{d_n\}$. On the other hand,
if $\{d_n\}$ is not minimum upper bound for $M$, then there
exists an upper bound for $M$ $\{r_n\}$ such that $\{r_n\} < \{d_n\}$.
We set $j= \min \{ n \in \mathbb{N} \mid d_n \neq r_n \}$.
By the definition of $\{d_n\}$, there exists an
$X = (x_0, x_1, \dots ) \in M$ such that $x_k = d_k$ for
$0 \le k \le j$. Then we have $\{r_n\} < X \le \{d_n\}$.
This is impossible.

The case of the infimum can be proved by the same argument.
\end{proof}

In order to introduce the algebraic structure for $S$,
we require some preparations.

\begin{definition}  \label{def_lim}
Let $\{a_n\}_{n=1}^{\infty}$ be a sequence of rational
numbers. We define $L(a_n)$ if and only if, for all $m \in \mathbb{N}$,
there exists an $N \in \mathbb{N}$ such that
$|a_n| < 1/m$ holds for all $n \ge N$.
\end{definition}

Note that in the usual sense $L(a_n)$ means
$\displaystyle \lim_{n \to \infty} a_n = 0$.

The following definition and lemma are the same as in \cite{knopf_alt}.

\begin{definition}  \label{def_xn}
Let $X \in S$ with $X = (x_0, x_1, \dots)$.
We define
\[ X_n = (x_0, x_1, \dots , x_n, 0, \dots), \]
where $n \in \mathbb{N}$.
\end{definition}

We can easily see that the next lemma holds.

\begin{lemma}  \label{lem_xn}
Let $X \in S$ with $X = (x_0, x_1, \dots)$.
Then we have

\begin{enumerate}
\item $X_{2n} \le X_{2n+2} \le X \le X_{2n+1} \le X_{2n-1}$,
\item $L(X_{2n-1} - X_{2n})$,
\item $\sup X_{2n} = \inf X_{2n-1} = X$.
\end{enumerate}
\end{lemma}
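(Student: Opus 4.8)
The plan is to prove the three claims of Lemma~\ref{lem_xn} in order, relying heavily on the ordering rule in Definition~\ref{def_ord} and on the fact (from Proposition~\ref{prop_tcn} and Lemma~\ref{lem_tcn}) that truncating a sequence in $S$ after finitely many nonzero terms still yields an element of $S = T(\{c_n\})$, so that $X_n$ is a legitimate element of $S$. For part (1), I would compare $X$ with $X_m$ for consecutive values of $m$. The sequences $X$ and $X_{m}$ agree in coordinates $0,\dots,m$ and differ first at coordinate $m+1$ (assuming $x_{m+1}\neq 0$; if $x_{m+1}=0$ then all later $x_k$ vanish and $X=X_m=X_{m+1}=\cdots$, trivializing everything). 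The first discrepancy index is $i=m+1$, with the $X_m$-coordinate equal to $0$ and the $X$-coordinate equal to $x_{m+1}>0$. Applying Definition~\ref{def_ord}: if $m+1$ is odd then $X_m < X$, and if $m+1$ is even (with $m+1\ge 2$) then $X_m > X$. Reading this for $m=2n-1$ and $m=2n$ gives $X_{2n}\le X$ and $X \le X_{2n+1}$. Similarly, comparing $X_{2n}$ with $X_{2n+2}$ (first discrepancy at index $2n+1$, odd, with $X_{2n}$ having $0$ there) gives $X_{2n}\le X_{2n+2}$, and comparing $X_{2n-1}$ with $X_{2n+1}$ (first discrepancy at index $2n$, even) gives $X_{2n+1}\le X_{2n-1}$, completing the chain.

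For part (2), I would compute the difference $X_{2n-1}-X_{2n}$. Since both lie in $S$ and are identified with rationals via the embedding $\mathbb{Q}\subset S$ (they have a zero coordinate, so belong to $Q_S$), their difference as rationals is
\[
  (X_{2n-1}-X_{2n}) = \Bigl(x_0 + \sum_{k=1}^{2n-1}(-1)^{k-1}x_k\Bigr) - \Bigl(x_0 + \sum_{k=1}^{2n}(-1)^{k-1}x_k\Bigr) = x_{2n}.
\]
Then I invoke Lemma~\ref{lem_tcn} (2)--(3): for the sequence $\{x_n\}\in T(\{c_n\})$ we have $x_{n+1}\le 1/(a_n+1)$ and, by Lemma~\ref{lem_tcn} (1), $a_n$ is strictly increasing, hence $a_n\ge a_1+n-1\to\infty$; therefore $x_{2n}\to 0$ in the ordinary sense, which is exactly $L(X_{2n-1}-X_{2n})$ by the remark after Definition~\ref{def_lim}. (If some $x_m=0$ the tail is eventually constantly zero and $L$ holds trivially.)

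For part (3), I would show $\sup\{X_{2n}\} = X$ and $\inf\{X_{2n-1}\} = X$. By part (1), $X$ is an upper bound for $\{X_{2n}\}_{n}$ and a lower bound for $\{X_{2n-1}\}_{n}$, and Theorem~\ref{th_sup_inf} guarantees the supremum and infimum exist in $S$. Suppose $Y\in S$ is an upper bound for $\{X_{2n}\}$ with $Y<X$; let $j$ be the first index where $Y$ and $X$ disagree. Since $Y\ge X_{2j}$ and $X_{2j}$ agrees with $X$ in all coordinates $0,\dots,2j$, in particular in coordinate $j$, the first disagreement between $Y$ and $X$ forces (via Definition~\ref{def_ord} applied to $Y<X$ and to $X_{2j}\le Y$) a contradiction in the direction of the inequality at index $j$; the parity bookkeeping is the same as in Proposition~\ref{prop_ord_cons}~(3). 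Hence no such $Y$ exists and $\sup X_{2n}=X$; the argument for $\inf X_{2n-1}=X$ is symmetric. The main obstacle I anticipate is nothing deep but purely bookkeeping: keeping the parity cases straight when combining $Y<X$ with $X_{2j}\le Y$ (or $X\le X_{2j-1}$ with $Y'\le X$ for the infimum), since the direction of the coordinatewise comparison flips with the parity of the discrepancy index; one has to make sure the chosen truncation $X_{2j}$ or $X_{2j-1}$ has the right parity so that its coordinate agreement through index $j$ actually yields the contradiction rather than a consistent inequality.
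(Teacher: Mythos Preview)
Your argument is correct, and there is essentially nothing in the paper to compare it against: the paper states that Definition~\ref{def_xn} and Lemma~\ref{lem_xn} are taken over from \cite{knopf_alt} and simply asserts ``We can easily see that the next lemma holds,'' giving no proof. Your coordinatewise comparison via Definition~\ref{def_ord} for part~(1), the computation $X_{2n-1}-X_{2n}=x_{2n}$ together with Lemma~\ref{lem_tcn} for part~(2), and the least-upper-bound argument for part~(3) are precisely the routine checks the paper is leaving to the reader.

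One small point you may want to tighten. You claim that every truncation $(x_0,\dots,x_m,0,0,\dots)$ lies in $S=T(\{c_n\})$, but this is not literally true: condition~(3) of Definition~\ref{def_tcn} fails for $X_1$ when $x_1=1$, and condition~(6) fails for $X_m$ when $a_m=\tfrac{c_m}{c_{m-1}}a_{m-1}(a_{m-1}+1)$ exactly while $x_{m+1}\neq 0$. The paper's own Definition~\ref{def_xn} has the same looseness. The repair is painless: treat $X_m$ as the rational number $x_0+\sum_{k=1}^{m}(-1)^{k-1}x_k$ via the identification $\mathbb{Q}\cong Q_S$ stated after Proposition~\ref{prop_ord_cons}. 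Your part~(2) already does this. For the order comparisons in~(1) and~(3), in the generic case the sequence $(x_0,\dots,x_m,0,\dots)$ \emph{is} the $S$-expansion of that rational and your argument runs verbatim; in the two exceptional cases one checks directly that the re-expanded sequence still compares with $X$ in the required direction (e.g.\ if $x_1=1$ then $X_1$ has expansion $(x_0+1,0,\dots)$, and the first disagreement with $X$ at index~$0$ gives $X<X_1$). This does not affect the structure of your proof, only the justification of the sentence ``$X_n$ is a legitimate element of $S$.''
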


In order to prove Lemma \ref{lem_main}, we also require the next lemma.

\begin{lemma}  \label{lem_prep}
Let $\{a_n\}$ be a monotonically increasing sequence of
rational numbers which is bounded from above. Let
$X = \sup a_n$. Then we have $L(X_{2n-1} - a_n)$.
\end{lemma}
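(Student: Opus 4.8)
The plan is to exploit Lemma \ref{lem_xn} to control the tail of $X$ and then compare the monotone sequence $\{a_n\}$ to both $X$ and its truncations $X_{2n-1}$. Fix $m \in \mathbb{N}$; I want to show $|X_{2n-1} - a_n| < 1/m$ for all sufficiently large $n$. Since $a_n \le X$ for every $n$ (as $X = \sup a_n$) and $X \le X_{2n-1}$ by Lemma \ref{lem_xn} (1), we automatically have $X_{2n-1} - a_n \ge X - a_n \ge 0$, so the absolute value is just $X_{2n-1} - a_n$, and it suffices to bound this above by $1/m$. Write $X_{2n-1} - a_n = (X_{2n-1} - X) + (X - a_n)$, a sum of two nonnegative quantities, and bound each separately.

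For the first summand, Lemma \ref{lem_xn} (2) gives $L(X_{2n-1} - X_{2n})$, and since $0 \le X_{2n-1} - X \le X_{2n-1} - X_{2n}$ by the chain $X_{2n} \le X \le X_{2n-1}$ in Lemma \ref{lem_xn} (1), we get $L(X_{2n-1} - X)$; hence there is $N_1$ with $X_{2n-1} - X < 1/(2m)$ for all $n \ge N_1$. For the second summand, $X = \sup a_n$ means $X - 1/(2m)$ is not an upper bound for $\{a_n\}$, so there exists $N_2$ with $a_{N_2} > X - 1/(2m)$; since $\{a_n\}$ is monotonically increasing, $a_n \ge a_{N_2} > X - 1/(2m)$ for all $n \ge N_2$, i.e.\ $X - a_n < 1/(2m)$. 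Taking $N = \max\{N_1, N_2\}$, for all $n \ge N$ we get $X_{2n-1} - a_n < 1/(2m) + 1/(2m) = 1/m$, which is exactly what is needed for $L(X_{2n-1} - a_n)$.

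The only mild subtlety is making sure all the comparisons $a_n \le X$, $X \le X_{2n-1}$, and $X_{2n} \le X$ are legitimate in this setting: the first is the definition of supremum (which exists by Theorem \ref{th_sup_inf} applied to the bounded set $\{a_n\}$, and is where we need $\{a_n\}$ bounded above), while the latter two come directly from Lemma \ref{lem_xn} (1). There is no real obstacle here — the argument is the standard $\varepsilon/2$ split, transcribed into the $L(\cdot)$ formalism of Definition \ref{def_lim}, with the monotonicity of $\{a_n\}$ doing the work of upgrading a single witness $a_{N_2}$ into a uniform tail bound. I would write it in exactly the four steps above: reduce to bounding $X_{2n-1} - a_n$ from above, split into $(X_{2n-1} - X) + (X - a_n)$, dispatch the first term via $L(X_{2n-1} - X_{2n})$ and squeezing, dispatch the second via the supremum property plus monotonicity, then combine.
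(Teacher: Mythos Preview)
Your argument is the natural $\varepsilon/2$ proof one would give over $\mathbb{R}$, but in the paper's setting it is circular. At this point in Section 3 the real numbers are being \emph{constructed}: only the order $<$ on $S$ and the existence of suprema (Theorem \ref{th_sup_inf}) are available, while addition and subtraction on $S$ are not introduced until \emph{after} Lemma \ref{lem_main}. Your decomposition $X_{2n-1} - a_n = (X_{2n-1} - X) + (X - a_n)$ therefore has no meaning when $X \notin Q_S$, since $X_{2n-1} - X$ and $X - a_n$ are differences involving a possibly non-rational element of $S$; likewise ``$X - 1/(2m)$ is not an upper bound'' presupposes a subtraction in $S$ that does not yet exist. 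Even the predicate $L(\cdot)$ of Definition \ref{def_lim} is only declared for sequences of \emph{rational} numbers, so an expression such as $L(X_{2n-1} - X)$ is not well-formed here.

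The paper's proof sidesteps exactly this trap by arguing by contradiction and keeping every arithmetic manipulation inside $\mathbb{Q}$. From the failure of $L(X_{2n-1} - a_n)$ together with the monotonicity of the rational sequence $X_{2n-1} - a_n$ one gets $X_{2n-1} - a_n \ge 1/m$ for all $n$; combining this with $X_{2N-1} - X_{2N} < 1/(2m)$ (Lemma \ref{lem_xn} (2)) yields the purely rational inequality $a_n < X_{2N} - 1/(2m)$ for $n \ge N$, so the rational number $X_{2N} - 1/(2m)$ is an upper bound for $\{a_n\}$. Only the \emph{order} on $S$ is then invoked to reach the contradiction $X = \sup a_n \le X_{2N} - 1/(2m) < X_{2N} \le X$. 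If you try to repair your direct approach by replacing $X$ in the splitting with a rational proxy such as $X_{2k}$ and tracking indices, you will find yourself essentially reproducing this contradiction argument.
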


\begin{proof}
By contradiction. Assume that there exists an $m$ such that
\[ \forall N \in \mathbb{N}, \exists n \in \mathbb{N}
   [\text{$n \ge N$ and $|X_{2n-1} - a_n| = X_{2n-1} - a_n \ge 1/m$}] \]
holds. Since we have $X_{2n-1} - a_n \ge X_{2n+1} - a_{n+1}$
by the assumption of the lemma, we have $X_{2n-1} - a_n \ge 1/m$ for all
$n \in \mathbb{N}$. On the other hand, by Lemma \ref{lem_xn},
there exists an $N \in \mathbb{N}$ such that
\[ X_{2n-1} - X_{2n} < 1/2m \]
holds for all $n \ge N$. Hence we have
\[ \begin{split}
     a_n &\le X_{2n-1} - \frac{1}{m}  \\
         &\le X_{2N-1} - \frac{1}{m}  \\
         &< X_{2N} - \frac{1}{2m}
   \end{split} \]
for $n \ge N$. This implies that $X_{2N} - (1/2)m$ is
an upper bound for $\{a_n\}$. Therefore we obtain
\[ \sup a_n \le X_{2N} - \frac{1}{2m} < X_{2N}
   \le \sup X_{2n} = X. \]
This contradicts the definition of $X$.
\end{proof}

The following lemma is important in the proofs
of algebraic properties of $S$. It seems
that this lemma can be used in the work of
A. Knopfmacher and J. Knopfmacher
\cite{knopf_prod}, \cite{knopf_pos}, \cite{knopf_alt}.

\begin{lemma}  \label{lem_main}
Let $\{a_n\}, \{b_n\}$ be monotonically increasing sequence
of rational numbers which are bounded from above. Then
$\sup a_n = \sup b_n$ is equivalent to $L(a_n - b_n)$.
\end{lemma}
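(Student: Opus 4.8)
The plan is to prove the two implications separately, using the truncation operator $X_n$ of Definition \ref{def_xn} together with Lemma \ref{lem_xn} and Lemma \ref{lem_prep}, and keeping in mind throughout that addition and subtraction on $S$ are not yet available: every arithmetic manipulation must be carried out inside $\mathbb{Q}$, equivalently inside $Q_S$ via the order-isomorphism $Q_S \cong \mathbb{Q}$ established just before Theorem \ref{th_sup_inf}. Note first that both suprema exist by Theorem \ref{th_sup_inf}, since the sequences are bounded from above.

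For the forward implication, suppose $\sup a_n = \sup b_n =: X$. Applying Lemma \ref{lem_prep} to $\{a_n\}$ and then to $\{b_n\}$ yields $L(X_{2n-1} - a_n)$ and $L(X_{2n-1} - b_n)$, and by Lemma \ref{lem_xn} (1) both $X_{2n-1} - a_n$ and $X_{2n-1} - b_n$ are nonnegative rationals. Since $a_n - b_n = (X_{2n-1} - b_n) - (X_{2n-1} - a_n)$, we get $|a_n - b_n| \le (X_{2n-1} - a_n) + (X_{2n-1} - b_n)$. Given $m \in \mathbb{N}$, choose $N$ beyond which each of the two summands is less than $1/(2m)$; then $|a_n - b_n| < 1/m$ for all $n \ge N$, so $L(a_n - b_n)$.

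For the converse, assume $L(a_n - b_n)$ and suppose, for contradiction, that $\sup a_n \neq \sup b_n$. Since $L(a_n - b_n)$ and $L(b_n - a_n)$ are the same condition, by the trichotomy in Proposition \ref{prop_ord_cons} we may assume $Y := \sup b_n < \sup a_n$. Then $Y$ is not an upper bound of $\{a_n\}$, so there is a $k$ with $a_k > Y$. By Lemma \ref{lem_xn} (1), (3) the sequence $\{Y_{2m-1}\}$ is decreasing with infimum $Y < a_k$, hence $Y_{2m_1 - 1} < a_k$ for some $m_1$; here $Y_{2m_1 - 1} \in Q_S$, so $\delta := a_k - Y_{2m_1 - 1}$ is a well-defined positive rational, and we fix $m \in \mathbb{N}$ with $1/m \le \delta$. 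Now for every $n \ge k$ we have $a_n \ge a_k$ by monotonicity and $b_n \le Y \le Y_{2m_1 - 1}$, so $|a_n - b_n| = a_n - b_n \ge a_k - Y_{2m_1 - 1} = \delta \ge 1/m$, contradicting $L(a_n - b_n)$.

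The only genuine obstacle is the one flagged above: because $S$ carries no arithmetic at this stage of the paper, in the converse I cannot literally form $a_k - Y$ or compare $Y$ with $1/m$ when $Y$ is irrational. The device that resolves this is to interpose a rational truncation $Y_{2m_1 - 1}$ between $Y$ and $a_k$ — possible precisely because $\inf_m Y_{2m - 1} = Y$ — after which both quantities involved are rational and the estimate $a_n - b_n \ge \delta$ becomes an ordinary inequality in $\mathbb{Q}$. Everything else is routine bookkeeping with Lemma \ref{lem_xn} and the identification $Q_S \cong \mathbb{Q}$.
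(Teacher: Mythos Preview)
Your proof is correct and follows essentially the same route as the paper: the forward direction is identical (triangle inequality via $X_{2n-1}$ and two applications of Lemma~\ref{lem_prep}), and the converse is the same contradiction argument of interposing a rational truncation of the smaller supremum between the two sequences. The only cosmetic difference is that you fix a single truncation $Y_{2m_1-1}$ and a single index $k$, whereas the paper keeps the varying truncations $X_{2n-1}$ and uses the monotonicity of $b_n - X_{2n-1}$; your version is slightly leaner, and your explicit remark that all arithmetic must stay inside $\mathbb{Q}$ is a point the paper leaves implicit.
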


\begin{proof}
First we assume $\sup a_n = \sup b_n$. We set $X = \sup a_n = \sup b_n$.
Since
\[ |a_n - b_n| \le |a_n - X_{2n-1}| + |X_{2n-1} - b_n|, \]
we have $L(a_n - b_n)$ by Lemma \ref{lem_prep}.

Next we assume $L(a_n - b_n)$. By contradiction. Assume
that $\sup a_n \neq \sup b_n$. Without loss of generality,
we may assume $\sup a_n < \sup b_n$. We set $X=\sup a_n$. Then
there exists an $N \in \mathbb{N}$ such that $X_{2n-1} < b_n$
holds for all $n \ge N$.
Since $b_n - X_{2n-1} \le b_{n+1} - X_{2n+1}$ for $n \ge N$,
we have
\[ |b_n - a_n| = (b_n - X_{2n-1}) + (X_{2n-1} - a_n)
   \ge b_N - X_{2N-1} > 0 \]
for $n \ge N$. This contradicts $L(a_n - b_n)$.
\end{proof}

Now we define the operators on $S$, and prove that $S$
is an ordered field. (These definitions are the same as
in \cite{knopf_alt}.)

\begin{definition}
Let $X,Y \in S$. We define the following symbol and
operators.

\begin{enumerate}
\item $0 = (0,0, \dots) (= 0 \in \mathbb{Q})$.
\item $X + Y = \sup(X_{2n} + Y_{2n})$.
\item $-X = \sup(-X_{2n-1})$.
\end{enumerate}
\end{definition}

\begin{definition}
Let $X,Y \in S$. We define the following symbol and
operators.

\begin{enumerate}
\item $1 = (1,0, \dots) (= 1 \in \mathbb{Q})$.
\item 
\[ X \cdot Y =
     \begin{cases}
       \sup(X_{2n} \cdot Y_{2n}) & (X,Y \ge 0), \\
       (-X) \cdot (-Y) & (X,Y \le 0), \\
       -((-X) \cdot Y) & (X \le 0, Y \ge 0), \\
       -(X \cdot (-Y)) & (X \ge 0, Y \le 0).
     \end{cases} \]
\item
\[ X^{-1} =
     \begin{cases}
       \sup((X_{2n-1})^{-1}) & (X > 0), \\
       -((-X)^{-1}) & (X < 0).
     \end{cases} \]
\end{enumerate}
\end{definition}

Since $X_{2n} + Y_{2n} \le X_1 + Y_1$, $-X_{2n-1} \le -X_2$,
$X_{2n} \cdot Y_{2n} \le X_1 \cdot Y_1 \quad (X,Y \ge 0)$ and
$(X_{2n-1})^{-1} \le X_2^{-1} \quad (X > 0)$,
these definitions are possible.

Now we prove that $+$ (resp. $\cdot$) shares the same properties
with the usual addition (resp. multiplication).

\begin{proposition}  \label{prop_add}
Let $X, Y, Z \in S$. We have

\begin{enumerate}
\item $X+Y=Y+X$,
\item $X+0=X$,
\item $(X+Y)+Z=X+(Y+Z)$,
\item $X+(-X)=0$,
\item if $X<Y$, then $X+Z < Y+Z$.
\end{enumerate}
\end{proposition}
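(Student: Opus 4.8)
The plan is to verify each of the five properties of Proposition~\ref{prop_add} by unwinding the definition $X+Y=\sup(X_{2n}+Y_{2n})$ and applying Lemma~\ref{lem_main}, which is the tool that converts statements about suprema into statements about $L(\cdot)$. For (1), commutativity, one simply notes $X_{2n}+Y_{2n}=Y_{2n}+X_{2n}$ as rational numbers, so the two sequences whose suprema we take are identical; this is immediate. For (2), $X+0=X$: by definition $0=(0,0,\dots)$, so $0_{2n}=0$ and $X+0=\sup(X_{2n}+0)=\sup X_{2n}=X$ by Lemma~\ref{lem_xn}~(3).

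For associativity (3), I would argue that both $(X+Y)+Z$ and $X+(Y+Z)$ equal the supremum of $\{X_{2n}+Y_{2n}+Z_{2n}\}$. The subtlety is that $(X+Y)+Z=\sup((X+Y)_{2n}+Z_{2n})$ involves the truncation of $X+Y$, not $X_{2n}+Y_{2n}$ directly, so these are not literally the same rational sequence. The clean route is: the sequence $\{X_{2n}+Y_{2n}+Z_{2n}\}$ is monotonically increasing (by Lemma~\ref{lem_xn}~(1)) and bounded above (by $X_1+Y_1+Z_1$), so it has a supremum $W\in S$; I claim $(X+Y)+Z=W$. To see this, apply Lemma~\ref{lem_main}: it suffices to show $L\bigl((X+Y)_{2n}+Z_{2n}-(X_{2n}+Y_{2n}+Z_{2n})\bigr)$, i.e.\ $L\bigl((X+Y)_{2n}-(X_{2n}+Y_{2n})\bigr)$. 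Now $X+Y=\sup(X_{2n}+Y_{2n})$ with $\{X_{2n}+Y_{2n}\}$ monotonically increasing and bounded above, so Lemma~\ref{lem_prep} gives $L\bigl((X+Y)_{2n-1}-(X_{2n}+Y_{2n})\bigr)$, and combining with $L\bigl((X+Y)_{2n-1}-(X+Y)_{2n}\bigr)$ from Lemma~\ref{lem_xn}~(2) yields the claim. By symmetry $X+(Y+Z)=W$ as well. The same Lemma~\ref{lem_main} manoeuvre handles (4): $X+(-X)=\sup(X_{2n}+(-X)_{2n})$, and since $-X=\sup(-X_{2n-1})$ one shows $L\bigl((-X)_{2n}-(-X_{2n})\bigr)$ via Lemma~\ref{lem_prep}, so $X+(-X)=\sup(X_{2n}-X_{2n})=\sup 0=0$.

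For (5), the order-compatibility, I would first reduce to the case $Z=Z_N$ for large $N$ by a limiting argument, or argue directly: if $X<Y$ in the sense of Definition~\ref{def_ord}, let $i=\min\{n:X_n\neq Y_n\}$ and examine the position where the truncated sums $X_{2n}+Z_{2n}$ and $Y_{2n}+Z_{2n}$ first diverge in the $\sup$-construction. The natural approach is to show $X+Z<Y+Z$ by exhibiting, for $n$ large, that $X_{2n}+Z_{2n}$ and $Y_{2n}+Z_{2n}$ straddle a fixed rational gap coming from the $i$-th coordinate of $X$ versus $Y$, and then invoke trichotomy (Proposition~\ref{prop_ord_cons}~(2)) together with Lemma~\ref{lem_main} to rule out $X+Z=Y+Z$ (which would force $L(X_{2n}-Y_{2n})$, hence $X=Y$, contradiction) and $Y+Z<X+Z$ (which by the same reasoning applied with roles reversed would force $Y<X$). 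I expect (5) to be the main obstacle, since it is the only part that genuinely mixes the order structure of Definition~\ref{def_ord} with the analytic $\sup$-definition of $+$; the cleanest argument is probably the indirect one: suppose $X+Z\not<Y+Z$, then by trichotomy $X+Z=Y+Z$ or $Y+Z<X+Z$, derive $X=Y$ or $Y<X$ respectively (the latter by an additivity-of-negation computation $Y=Y+Z+(-Z)<X+Z+(-Z)=X$ once we know the weaker implication in one direction plus associativity and (4)), contradicting $X<Y$ in both cases.
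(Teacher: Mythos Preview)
Your treatment of (1)--(4) is essentially the paper's proof. One small simplification: in (3) you go through Lemma~\ref{lem_prep} and Lemma~\ref{lem_xn}(2) to obtain $L\bigl((X+Y)_{2n}-(X_{2n}+Y_{2n})\bigr)$, but this follows in one step from Lemma~\ref{lem_main} itself, since $\sup (X+Y)_{2n}=X+Y=\sup(X_{2n}+Y_{2n})$ by Lemma~\ref{lem_xn}(3) and the definition of $+$. The paper does exactly that: set $A=X+Y$, read off $L(A_{2n}-(X_{2n}+Y_{2n}))$ from Lemma~\ref{lem_main}, then add $Z_{2n}$. Likewise for (4).

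For (5) you make this harder than it is, and your indirect route has a circularity. The paper's argument is short: if $X<Y$ then for all $2n$ exceeding the first index of disagreement one has $X_{2n}<Y_{2n}$ as rationals, hence $X_{2n}+Z_{2n}<Y_{2n}+Z_{2n}$, and taking suprema gives $X+Z\le Y+Z$ directly. Equality is then excluded exactly as you say, via Lemma~\ref{lem_main}: $X+Z=Y+Z$ would force $L\bigl((X_{2n}+Z_{2n})-(Y_{2n}+Z_{2n})\bigr)=L(X_{2n}-Y_{2n})$, hence $X=Y$. There is no need to separately rule out $Y+Z<X+Z$, and your proposed way of doing so --- deducing $Y=Y+Z+(-Z)<X+Z+(-Z)=X$ --- already uses (5) on the pair $Y+Z<X+Z$ with the summand $-Z$, so it presupposes what you are proving. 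The clean fix is simply to observe that the termwise inequality $X_{2n}+Z_{2n}\le Y_{2n}+Z_{2n}$ immediately yields $\sup(X_{2n}+Z_{2n})\le\sup(Y_{2n}+Z_{2n})$, which disposes of the third case without any cancellation trick.
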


\begin{proof}
(1), (2) These trivially follow from the definition of $+$.

(3) We set $A=X+Y$, which means $L(A_{2n} - (X_{2n} + Y_{2n}))$
by Lemma \ref{lem_main}. Since
\[ |(A_{2n} + Z_{2n}) - (X_{2n} + Y_{2n} + Z_{2n})| =
   |A_{2n} - (X_{2n} + Y_{2n})|, \]
we have $L((A_{2n} + Z_{2n}) - (X_{2n} + Y_{2n} + Z_{2n}))$.
By Lemma \ref{lem_main}, this implies
$\sup (A_{2n} + Z_{2n}) = \sup (X_{2n} + Y_{2n} + Z_{2n})$,
hence we obtain
$(X+Y)+Z = \sup (X_{2n} + Y_{2n} + Z_{2n})$.
By the same argument, we can also prove that
$X+(Y+Z)= \sup (X_{2n} + Y_{2n} + Z_{2n})$.

(4) We set $A=-X$, which means $L(A_{2n} + X_{2n-1})$
by Lemma \ref{lem_main}. Since
\[ |X_{2n} + A_{2n}| \le |X_{2n} - X_{2n-1}| + |X_{2n-1} + A_{2n}|, \]
we have $L((X_{2n} + A_{2n}) - 0)$ from Lemma \ref{lem_xn}.
This implies $\sup (X_{2n} + A_{2n}) = \sup 0$ by Lemma \ref{lem_main},
hence we obtain (4).

(5) Since $X_{2n} + Z_{2n} < Y_{2n} + Z_{2n}$ holds for
sufficiently large $n$, we have $X+Z \le Y+Z$. If
$X+Z=Y+Z$, then we have $L((X_{2n} + Z_{2n}) - (Y_{2n} + Z_{2n}))$
by Lemma \ref{lem_main}.
In short we have $L(X_{2n} -Y_{2n})$. However, this is impossible
by Lemma \ref{lem_main}.
\end{proof}

From Proposition \ref{prop_add} (1), (2), (3) and (4),
it follows that $S$ is an abelian group
on $+$, hence we can use $-(-X) = X$, $-(X+Y) = (-X) + (-Y)$,
etc. Moreover we obtain $0<X \Leftrightarrow 0+(-X) < X+(-X)
\Leftrightarrow -X < 0$, $X < 0 \Leftrightarrow 0 < -X$ and
$X < Y \Leftrightarrow -X > -Y$
by Proposition \ref{prop_add} (5).

\begin{proposition}  \label{prop_mul}
Let $X, Y, Z \in S$. We have

\begin{enumerate}
\item $X \cdot Y = Y \cdot X$,
\item $X \cdot 1 = X$,
\item $X \cdot Y = -((-X) \cdot Y) = -(X \cdot (-Y))$,
\item $X \cdot X^{-1} = 1$ $\quad (X \neq 0)$,
\item $(X \cdot Y) \cdot Z = X \cdot (Y \cdot Z)$,
\item if $X < Y$ and $Z > 0$, then $XZ < YZ$.
\end{enumerate}
\end{proposition}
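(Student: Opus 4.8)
The plan is to establish the six items in the order (1), (2), (3), (6), (4), (5), imitating the proof of Proposition~\ref{prop_add} and using throughout Lemma~\ref{lem_main}, Lemma~\ref{lem_xn}, and the abelian--group structure of $(S,+)$ recorded after Proposition~\ref{prop_add}. For (1) and (2), when $X,Y\ge 0$ the assertions follow at once from the definition of $\cdot$: commutativity of rational multiplication gives $\sup(X_{2n}Y_{2n})=\sup(Y_{2n}X_{2n})$, and since $1_{2n}=1$ for every $n$ we get $X\cdot 1=\sup X_{2n}=X$ by Lemma~\ref{lem_xn}~(3). All remaining sign placements reduce to the nonnegative case by unfolding the case--definition of $\cdot$ and using $-(-W)=W$; the overlapping cases $X=0$ or $Y=0$ are checked to yield the same value. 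Item (3) is then pure case analysis on the signs of $X$ and $Y$: for instance when $X,Y\ge 0$ the definition gives $(-X)\cdot Y=-(X\cdot Y)$ directly, hence $-((-X)\cdot Y)=X\cdot Y$, and $-(X\cdot(-Y))=X\cdot Y$ follows similarly (or from the case just treated together with (1)).

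For (6) the core is the case $0\le X<Y$ and $Z>0$. Applying Definition~\ref{def_ord} to the truncations and using Lemma~\ref{lem_xn}, one has $0\le X_{2n}<Y_{2n}$ and $Z_{2n}\ge 0$ for all large $n$, hence $X_{2n}Z_{2n}\le Y_{2n}Z_{2n}$ and therefore $XZ\le YZ$. Strictness is obtained by contradiction: if $XZ=YZ$ then $L(Y_{2n}Z_{2n}-X_{2n}Z_{2n})$ by Lemma~\ref{lem_main}; choosing $m$ with $Z_{2m}>0$ and setting $\delta=Z_{2m}$, so that $Z_{2n}\ge\delta>0$ for $n\ge m$, the inequality $0\le\delta(Y_{2n}-X_{2n})\le Y_{2n}Z_{2n}-X_{2n}Z_{2n}$ forces $L(Y_{2n}-X_{2n})$, i.e. $X=\sup X_{2n}=\sup Y_{2n}=Y$, a contradiction. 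The remaining sign placements follow from this case and (3): if $X<0\le Y$ then $XZ=-((-X)Z)<0\le YZ$, because a product of two positive elements is positive (some truncation--product is a fixed positive rational bounding the supremum below), and the cases $X\le Y<0$ and $X<Y<0$ are reduced to $0\le X<Y$ via $-Y<-X$ together with (3).

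For (4), fix $X>0$ and set $Y=X^{-1}=\sup_n(X_{2n-1})^{-1}$; this supremum is legitimate since $X_{2n-1}\ge X>0$ makes each $(X_{2n-1})^{-1}$ a well-defined positive rational, the sequence is increasing, and $(X_{2n-1})^{-1}\le X_2^{-1}$ (with $X_2>0$) as noted after the definitions. By Lemma~\ref{lem_main}, $L(Y_{2n}-(X_{2n-1})^{-1})$. Since $X,Y>0$ we have $X\cdot X^{-1}=\sup(X_{2n}Y_{2n})$, and I would prove this equals $1$ by verifying $L(X_{2n}Y_{2n}-1)$ and invoking Lemma~\ref{lem_main}. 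The key step is the splitting
\[
  X_{2n}Y_{2n}-1 = X_{2n}\bigl(Y_{2n}-(X_{2n-1})^{-1}\bigr)
    + (X_{2n-1})^{-1}\bigl(X_{2n}-X_{2n-1}\bigr),
\]
where the first summand is a bounded sequence times a null sequence, and the second is the bounded sequence $(X_{2n-1})^{-1}$ (squeezed between $(X_1)^{-1}$ and $X_2^{-1}$) times the null sequence $X_{2n}-X_{2n-1}$ supplied by Lemma~\ref{lem_xn}~(2); hence the left side satisfies $L$. The case $X<0$ follows from $X^{-1}=-((-X)^{-1})$ and (3).

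Finally, (5) mirrors Proposition~\ref{prop_add}~(3): for $X,Y,Z\ge 0$ put $A=XY=\sup(X_{2n}Y_{2n})$, so $L(A_{2n}-X_{2n}Y_{2n})$ by Lemma~\ref{lem_main}; since $Z_{2n}$ is bounded we get $L\bigl((A_{2n}-X_{2n}Y_{2n})Z_{2n}\bigr)$, hence $(XY)Z=AZ=\sup(A_{2n}Z_{2n})=\sup(X_{2n}Y_{2n}Z_{2n})$ again by Lemma~\ref{lem_main}, and by symmetry $X(YZ)$ equals the same supremum; the eight sign cases reduce to this one using (3). The main obstacle is (4): because $X^{-1}$ is built from the \emph{odd} truncations while $\cdot$ is computed from the \emph{even} ones, no direct cancellation is available, and it is precisely the splitting identity above --- combined with the two null sequences $L(Y_{2n}-(X_{2n-1})^{-1})$ and $L(X_{2n-1}-X_{2n})$ and the uniform bound on $(X_{2n-1})^{-1}$ --- that makes the argument work. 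A secondary, purely bookkeeping, difficulty is the proliferation of sign cases in (5) and (6), all of which are routine once (3) is in hand.
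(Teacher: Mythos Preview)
Your proposal is correct and follows essentially the same approach as the paper: both proofs handle (1)--(2) directly from the definition, establish (3) by sign case analysis, prove (5) by the same ``$L(A_{2n}-X_{2n}Y_{2n})$ then multiply by the bounded $Z_{2n}$'' argument mirroring Proposition~\ref{prop_add}~(3), and derive (6) from the nonnegative core case via (3). For (4) the paper makes two successive applications of Lemma~\ref{lem_main} --- first showing $\sup(X_{2n}A_{2n})=\sup\bigl(X_{2n}(X_{2n-1})^{-1}\bigr)$ via $|X_{2n}A_{2n}-X_{2n}(X_{2n-1})^{-1}|\le |X_1|\,|A_{2n}-(X_{2n-1})^{-1}|$, then showing this supremum equals $1$ via $|X_{2n}(X_{2n-1})^{-1}-1|\le |X_2^{-1}|\,|X_{2n}-X_{2n-1}|$ --- whereas your single splitting identity simply adds these two estimates together; the content is identical.
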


\begin{proof}
(1), (2) These trivially follow from the definition of $\cdot$.

(3) In the case $X, Y \ge 0$, by setting $Z = -X$ and $W = -Y$,
we have
\begin{gather}
-((-X) \cdot Y) = -(Z \cdot Y) = -(-((-Z) \cdot Y)) = X \cdot Y, \notag \\
-(X \cdot (-Y)) = -(X \cdot W) = -(-(X \cdot (-W))) = X \cdot Y. \notag
\end{gather}
From this case, we can prove the other cases. For example, in the
case $X \le 0$, $Y \ge 0$, we have
\[ -(X \cdot (-Y)) = -((-X) \cdot (-(-Y))) = -((-X) \cdot Y) = X \cdot Y.\]

(4)  For $X > 0$, we set $A=X^{-1}$, which means
$L(A_{2n} - (X_{2n-1})^{-1})$ by Lemma \ref{lem_main}.
Since
\[ |X_{2n} A_{2n} - X_{2n} (X_{2n-1})^{-1}| \le
   |X_1| \cdot |A_{2n} - (X_{2n-1})^{-1}|, \]
we obtain $L(X_{2n} A_{2n} - X_{2n} (X_{2n-1})^{-1})$.
This implies $X \cdot X^{-1} = \sup (X_{2n} (X_{2n-1})^{-1})$
by Lemma \ref{lem_main}. On the other hand, since
\[ |X_{2n} (X_{2n-1})^{-1} -1| =
   |(X_{2n-1})^{-1}| \cdot |X_{2n} - X_{2n-1}| \le
   |X_2^{-1}| \cdot |X_{2n} - X_{2n-1}|, \]
we obtain $L(X_{2n} (X_{2n-1})^{-1} -1)$. This implies
$X \cdot X^{-1} = 1$. In the case $X < 0$, by (3), we have
\[ X \cdot X^{-1} = X \cdot (-((-X)^{-1})) = (-X) \cdot (-X)^{-1} = 1. \]

(5) For $X, Y, Z \ge 0$, we can prove (5) by the same
argument as in the proof of Proposition \ref{prop_add} (3). By using
(3), we can prove the other cases from this case.
For example, in the case $X, Z \ge 0$ and $Y \le 0$,
we have
\begin{equation*}
\begin{split}
  (X \cdot Y) \cdot Z &= (-(X \cdot (-Y))) \cdot Z  \\
                      &= -((X \cdot (-Y)) \cdot Z)  \\
                      &= -(X \cdot ((-Y) \cdot Z))
                         \qquad (-Y > 0)  \\
                      &= X \cdot (-((-Y) \cdot Z))  \\
                      &= X \cdot (Y \cdot Z).
\end{split}
\end{equation*}

(6) For $X,Y \ge 0$, we can prove (6) by the same
argument as in the proof of Proposition \ref{prop_add} (5).
From this case, we can also prove the other cases easily.
For example, in the case $X<Y \le 0$, by (3), we have
\[ -(X \cdot Z) = (-X) \cdot Z > (-Y) \cdot Z = -(Y \cdot Z). \]
This implies $X \cdot Z < Y \cdot Z$.
\end{proof}

\begin{proposition}  \label{prop_add_mul}
Let $X, Y, Z \in S$. We have $X \cdot (Y + Z) = X \cdot Y + X \cdot Z$.
\end{proposition}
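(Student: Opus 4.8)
The plan is to follow the pattern of Propositions \ref{prop_add} (3) and \ref{prop_mul} (5): first prove the identity for $X,Y,Z \ge 0$ by reducing it, via Lemma \ref{lem_main}, to a routine statement about sequences of rationals, and then remove all sign restrictions using Proposition \ref{prop_mul} (3) and the abelian group structure of $(S,+)$.

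\emph{Step 1: the case $X,Y,Z \ge 0$.} Here $Y+Z \ge 0$, and $Y+Z = \sup(Y_{2n}+Z_{2n})$ gives $L((Y+Z)_{2n} - (Y_{2n}+Z_{2n}))$ by Lemma \ref{lem_main}. From
\[ |X_{2n}(Y+Z)_{2n} - X_{2n}(Y_{2n}+Z_{2n})| \le |X_1| \cdot |(Y+Z)_{2n} - (Y_{2n}+Z_{2n})| \]
one gets $L(X_{2n}(Y+Z)_{2n} - (X_{2n}Y_{2n}+X_{2n}Z_{2n}))$, so Lemma \ref{lem_main} yields
\[ X \cdot (Y+Z) = \sup(X_{2n}(Y+Z)_{2n}) = \sup(X_{2n}Y_{2n}+X_{2n}Z_{2n}). \]
On the other hand, $X\cdot Y = \sup(X_{2n}Y_{2n})$ and $X\cdot Z = \sup(X_{2n}Z_{2n})$ give $L((X\cdot Y)_{2n}-X_{2n}Y_{2n})$ and $L((X\cdot Z)_{2n}-X_{2n}Z_{2n})$, whence $L(((X\cdot Y)_{2n}+(X\cdot Z)_{2n}) - (X_{2n}Y_{2n}+X_{2n}Z_{2n}))$ and, by Lemma \ref{lem_main}, $X\cdot Y + X\cdot Z = \sup(X_{2n}Y_{2n}+X_{2n}Z_{2n})$. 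Comparing the two suprema proves this case.

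\emph{Step 2: arbitrary signs.} First I would remove the sign of $X$: if $X<0$ then by Proposition \ref{prop_mul} (3) and $-(A+B)=(-A)+(-B)$,
\[ X\cdot(Y+Z) = -((-X)\cdot(Y+Z)) = -((-X)\cdot Y + (-X)\cdot Z) = X\cdot Y + X\cdot Z \]
once the identity is known for the nonnegative first factor $-X$. So assume $X\ge 0$. If $Y,Z\le 0$, apply Step 1 to $X,-Y,-Z$ and use Proposition \ref{prop_mul} (3) to pull out the signs. The remaining case is $Y\ge 0\ge Z$ (the case $Z\ge 0\ge Y$ being symmetric). If $Y+Z\ge 0$, write $Y=(Y+Z)+(-Z)$ with both summands $\ge 0$; Step 1 gives $X\cdot Y = X\cdot(Y+Z)+X\cdot(-Z)$, and since $X\cdot(-Z)=-(X\cdot Z)$ this rearranges to the claim. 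If $Y+Z\le 0$, write $-Z=(-(Y+Z))+Y$ with both summands $\ge 0$; Step 1 gives $X\cdot(-Z)=X\cdot(-(Y+Z))+X\cdot Y$, i.e.\ $-(X\cdot Z)=-(X\cdot(Y+Z))+X\cdot Y$, which again rearranges to the claim.

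\emph{The main obstacle.} The subtle point is the last case, $Y\ge 0\ge Z$: the sign of $Y+Z$ is not determined by those of $Y$ and $Z$, so the definition of $\cdot$ cannot be applied in one piece. The remedy is to write one of $Y$, $-Z$ as a sum of two \emph{nonnegative} terms, one of which is $\pm(Y+Z)$, thereby reducing to Step 1; everything else is the routine $L(\cdot)$-bookkeeping used throughout Propositions \ref{prop_add} and \ref{prop_mul}.
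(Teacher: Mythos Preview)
Your proof is correct and follows essentially the same approach as the paper's: both establish the nonnegative case via Lemma \ref{lem_main} bookkeeping, and both handle mixed signs by rewriting one of $Y$, $-Z$ as a sum of two nonnegative elements (e.g.\ $Y=(Y+Z)+(-Z)$ when $Y+Z\ge 0$) so that Step~1 applies, with Proposition \ref{prop_mul} (3) used to move signs through. The only difference is organizational: the paper splits cases by the sign of $Y+Z$ (for $X\ge 0$) and leaves the remaining cases to Proposition \ref{prop_mul} (3), whereas you first strip the sign of $X$, then split on the signs of $Y,Z$, and subdivide the mixed case by the sign of $Y+Z$; the underlying trick is identical.
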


\begin{proof}
First we assume $X,Y,Z \ge 0$. Let $A = Y + Z$,
$B = X \cdot Y$ and $C = X \cdot Z$. Then
$L(A_{2n} - (Y_{2n} + Z_{2n}))$, $L(B_{2n} - X_{2n} Y_{2n})$
and $L(C_{2n} - X_{2n} Z_{2n})$ holds by Lemma \ref{lem_main}.
Since we have
\[
\begin{split}
  |X_{2n} A_{2n}& - (B_{2n} + C_{2n})|  \\
  &= |X_{2n}(A_{2n} - (Y_{2n} + Z_{2n})) +
    (X_{2n} Y_{2n} - B_{2n}) + (X_{2n} Z_{2n} - C_{2n})|  \\
  & \le |X_1| \cdot |A_{2n} - (Y_{2n} + Z_{2n})| +
    |X_{2n} Y_{2n} - B_{2n}| + |X_{2n} Z_{2n} - C_{2n}|,
\end{split}
\]
we obtain $L(X_{2n} A_{2n} - (B_{2n} + C_{2n}))$. This implies
$\sup(X_{2n} A_{2n}) = \sup (B_{2n} + C_{2n})$ from
Lemma \ref{lem_main}. This implies
$X \cdot (Y + Z) = X \cdot Y + X \cdot Z$.

Next we consider the case $X \ge 0$ and $Y + Z \ge 0$.
Since $Y \ge 0$ or $Z \ge 0$ holds by Proposition \ref{prop_add} (5),
we may assume $Z \le 0$.
Since $-Z \ge 0$, we obtain
\[ X \cdot (Y + Z) + X \cdot (-Z) = X \cdot (Y + Z + (- Z))
   = X \cdot Y. \]
This is equivalent to $X \cdot (Y + Z) = X \cdot Y + X \cdot Z$
by Proposition \ref{prop_mul} (3).

By Proposition \ref{prop_mul} (3), we can easily prove the other cases
from these cases. For example, in the case $X \ge 0$ and $Y+Z \le 0$,
we have
\[ X \cdot (Y+Z) = -(X \cdot ((-Y) + (-Z)))
   = -(X \cdot (-Y) + X \cdot (-Z)) = X \cdot Y + X \cdot Z. \]
\end{proof}

By Propositions \ref{prop_ord_cons}, \ref{prop_add},
\ref{prop_mul} and \ref{prop_add_mul}, $S$ is an ordered
field. Since any ordered field which satisfies Theorem
\ref{th_sup_inf} is isomorphic to $\mathbb{R}$ (see \cite{struct_real}),
we obtain the following theorem.

\begin{theorem}
The set $S$ can be identified with the complete ordered field of real numbers.
\end{theorem}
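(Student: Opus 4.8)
The plan is to assemble the structural results already established and feed them into the standard categoricity theorem for complete ordered fields. Concretely, Proposition~\ref{prop_ord_cons} shows that $<$ is a strict linear order on $S$; Proposition~\ref{prop_add} shows that $(S,+)$ is an abelian group with neutral element $0=(0,0,\dots)$; Proposition~\ref{prop_mul} shows that multiplication is commutative and associative, has neutral element $1=(1,0,\dots)$, and that every non-zero element has an inverse; Proposition~\ref{prop_add_mul} supplies distributivity; and items~(5) of Propositions~\ref{prop_add} and~\ref{prop_mul} say that $<$ is compatible with $+$ and with multiplication by positive elements. Hence $(S,+,\cdot,<)$ is an ordered field, and one notes in passing that it is non-degenerate since $(0,0,\dots)\neq(1,0,\dots)$, i.e.\ $0\neq 1$ in $S$.

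Next I would invoke Theorem~\ref{th_sup_inf}: every non-empty subset of $S$ bounded above has a supremum in $S$. Combined with the previous paragraph, this says precisely that $S$ is a \emph{complete} ordered field. It is classical (see \cite{struct_real}) that any complete ordered field admits a unique isomorphism onto $\mathbb{R}$, and that this isomorphism is automatically order-preserving; applying this to $S$ yields the desired identification $S\cong\mathbb{R}$ as ordered fields. If one additionally wants this identification to be compatible with the copy of $\mathbb{Q}$ sitting inside $S$ as $Q_S$ (obtained earlier via Propositions~\ref{prop_ord} and~\ref{prop_ratio}), it suffices to recall that any ordered-field embedding fixes the prime field pointwise, so the restriction of the isomorphism $S\cong\mathbb{R}$ to $Q_S$ recovers the canonical embedding of $\mathbb{Q}$.

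There is essentially no obstacle remaining at this stage — all of the real work was carried out in Theorem~\ref{th_sup_inf} and Propositions~\ref{prop_add}, \ref{prop_mul} and~\ref{prop_add_mul}. The only point requiring a moment's care is matching the hypothesis list of the cited categoricity theorem exactly: one should make sure completeness is being used in the least-upper-bound form that Theorem~\ref{th_sup_inf} delivers (rather than, say, Cauchy-completeness together with the Archimedean axiom), and that non-triviality of the field has been noted. Once that bookkeeping is in place, the theorem follows in a single line.
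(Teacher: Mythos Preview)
Your proposal is correct and follows essentially the same route as the paper: assemble Propositions~\ref{prop_ord_cons}, \ref{prop_add}, \ref{prop_mul}, \ref{prop_add_mul} to conclude that $(S,+,\cdot,<)$ is an ordered field, invoke Theorem~\ref{th_sup_inf} for the least-upper-bound property, and then cite the categoricity result in~\cite{struct_real}. The paper's argument is precisely this one-line synthesis; your added remarks on non-degeneracy and the embedding of $Q_S$ are sound but go slightly beyond what the paper records.
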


\section{An application}
Let $\{a_n\}_{n=1}^{\infty}$ be a sequence of positive integers.
For $K \ge 1$, we define $\{a_n\} \in P(K)$ if and only if
$a_{n+1} \ge K a_n (a_n + 1)$ holds for all sufficiently large
$n \in \mathbb{N}$.  For each $\{a_n\} \in P(K)$ we define
\[ f(z; \{a_n\}) = \sum_{n=1}^{\infty} \frac{z^n}{a_n}, \]
which is an entire function.

The purpose of this section is to prove the following theorem
by using some properties of the generalized alternating-Sylvester
expansion.

\begin{theorem}
Let $\{p_n\} \in P(K)$ and $l \in \{1, 2, 3, \dots, [K]\}$.
Then
\[ f(-l; \{p_n\}) = \sum_{n=1}^{\infty} \frac{(-1)^n}{p_n} l^n \]
is an irrational number.
\end{theorem}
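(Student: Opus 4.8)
The plan is to exhibit the tail of the series $f(-l;\{p_n\})$ as the value of a generalized alternating-Sylvester expansion, with respect to a suitable divisor sequence $\{c_n\}$, all of whose digits $q_n$ are nonzero, and then to invoke Proposition \ref{prop_ratio}. Write $f(-l;\{p_n\}) = -\beta$ with $\beta = \sum_{n=1}^{\infty}(-1)^{n-1}l^n/p_n$, so it suffices to show $\beta$ is irrational. Since $\{p_n\}\in P(K)$, fix $N\in\mathbb{N}$ with $p_{n+1}\ge K p_n(p_n+1)$ for all $n\ge N$; a routine computation with this recursion (using $p_n\ge 1$ and $K\ge 1$, so that $p_{n+1}\ge K p_n^2$) shows that $p_n$ grows at least doubly exponentially, so after enlarging $N$ we may also assume $p_n\ge l^n$ for all $n\ge N$. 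Put $r=\sum_{n=1}^{N-1}(-1)^{n-1}l^n/p_n\in\mathbb{Q}$ and $\delta=(-1)^{N-1}(\beta-r)=\sum_{m=1}^{\infty}(-1)^{m-1}l^{N+m-1}/p_{N+m-1}$; then $\beta$ is irrational if and only if $\delta$ is.

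Now set $c_m'=l^{N+m-1}$ for $m\in\mathbb{N}$. This is a sequence of positive integers with $c_m'\mid c_{m+1}'$, so the results of Sections 2 and 3 apply to it. Put $a_m'=p_{N+m-1}$, $q_m'=c_m'/a_m'=l^{N+m-1}/p_{N+m-1}$, and $q_0'=0$. I claim $\{q_m'\}_{m=0}^{\infty}\in T(\{c_m'\})$ in the sense of Definition \ref{def_tcn}. Indeed, (1) holds since $q_0'=0\in\mathbb{Z}$; (2) holds because $p_{N+m-1}\ge l^{N+m-1}$; (3), (4) and (6) are vacuous since every $q_m'$ with $m\ge 1$ is nonzero; and for (5) it suffices to check $\{a_m'\}\in U(\{c_m'\})$, i.e. $p_{N+m}\ge l\,p_{N+m-1}(p_{N+m-1}+1)$ for all $m\ge 1$, which holds because $N+m-1\ge N$ gives $p_{N+m}\ge K p_{N+m-1}(p_{N+m-1}+1)\ge l\,p_{N+m-1}(p_{N+m-1}+1)$, using $l\le[K]\le K$.

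By Lemma \ref{lem_tcn}(3) the nonzero digits are strictly decreasing, so $0<\delta<q_1'\le 1$, whence $q_0'=[\delta]=0$ is consistent; and by Proposition \ref{prop_tcn} we have $S(\{c_m'\})=T(\{c_m'\})$, so $\{q_m'\}$ is precisely the generalized alternating-Sylvester expansion with respect to $\{c_m'\}$ of the real number $q_0'+\sum_{m=1}^{\infty}(-1)^{m-1}q_m'=\delta$. Since $q_m'=l^{N+m-1}/p_{N+m-1}>0$ for every $m\ge 1$, Proposition \ref{prop_ratio} shows that $\delta$ is irrational; hence $\beta=r+(-1)^{N-1}\delta$, and therefore $f(-l;\{p_n\})=-\beta$, is irrational.

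The only step that is not a mechanical verification is the choice of $N$: one must confirm that the super-exponential growth forced by $p_{n+1}\ge K p_n(p_n+1)$ eventually dominates the geometric sequence $l^n$, so that the normalization $q_m'\le 1$ of Definition \ref{def_tcn}(2) can be met. Everything after that is bookkeeping against Definition \ref{def_tcn} and the lemmas of Section 3.
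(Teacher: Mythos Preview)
Your argument is correct and follows essentially the same route as the paper: split off a rational head, recognize the tail as a generalized alternating-Sylvester expansion by verifying membership in $T(\{c_m'\})$, invoke Proposition~\ref{prop_tcn} to identify $T$ with $S$, and conclude irrationality from Proposition~\ref{prop_ratio} since no digit vanishes. The only cosmetic differences are that the paper shifts by $2N-1$ (so the tail begins at an even index and no sign factor is needed), whereas you shift by $N-1$ and carry the factor $(-1)^{N-1}$; and you explicitly enlarge $N$ to force $p_n\ge l^n$ so that condition~(2) of Definition~\ref{def_tcn} holds, a point the paper passes over in silence. Your remark that conditions (3) and (6) are ``vacuous'' is a slight abuse of language---their hypotheses may hold---but their conclusions are immediate since every $q_m'$ is nonzero, so nothing is lost.
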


\begin{proof}
We assume that $p_{n+1} \ge K p_n (p_n + 1)$ for all $n \ge N$
($N \in \mathbb{N}$). We define $a_n = p_{n + 2N -1}$ and
$c_n = l^{n + 2N -1}$. Then we have
\begin{align*}
  f(-l; \{p_n\}) &= \sum_{n=1}^{2N-1} \frac{(-1)^n}{p_n} l^n
    + \sum_{n=2N}^{\infty} \frac{(-1)^n}{p_n} l^n  \\
  &= \sum_{n=1}^{2N-1} \frac{(-1)^n}{p_n} l^n +
    \sum_{n=1}^{\infty} \frac{(-1)^n c_n}{a_n} = A_1 + A_2,
\end{align*}
say. Note that $A_1 \in \mathbb{Q}$. Since we have
\begin{align*}
  a_{n+1} &= p_{n+2N} \ge K p_{n+2N-1} (p_{n+2N-1} + 1)  \\
    &\ge [K] a_n (a_n + 1) \ge \frac{c_{n+1}}{c_n} a_n (a_n + 1),
\end{align*}
by Definition \ref{def_tcn} and Proposition \ref{prop_tcn},
the series $A_2$ is the generalized alternating-Sylvester
expansion of the number $A_2$. By Proposition \ref{prop_ratio}
we obtain the theorem.
\end{proof}

\end{document}